\newcommand{\ch}[1]{#1}
\def\bl{\begin{lemma}}
\def\el{\end{lemma}}
\def\bth{\begin{theorem}}
\def\eth{\end{theorem}}
\def\bc{\begin{corollary}}
\def\ec{\end{corollary}}
\def\bcj{\begin{conjecture}}
\def\ecj{\end{conjecture}}
\def\bpr{\begin{proposition}}
\def\epr{\end{proposition}}
\def\bde{\begin{definition}}
\def\ede{\end{definition}}
\def\E{\mathbb{E}}
\newcommand{\be}{\begin{eqnarray}}
\newcommand{\ee}{\end{eqnarray}}
\newcommand{\eps}{{\mbox{$\epsilon$}}}
\newcommand{\Z}{{\mathbb Z}}
\newcommand{\M}{{\mathcal M}}
\newcommand{\Vertices}{V(G)}
\newcommand{\Edges}{E(G)}
\newcommand{\ct}[1]{ \stackrel{#1}{\conn} }
\newcommand{\Tm}{T_{{\rm mix}}}
\newcommand{\C}{{\mathcal{C}}}
\newcommand{\prob}{\mbox{\bf P}}
\newcommand{\p}{\mbox{\bf p}}
\newcommand{\lr}{\leftrightarrow}
\newtheorem{theorem}{Theorem}[section]
\newtheorem{definition}{Definition}[section]
\newtheorem{lemma}[theorem]{Lemma}
\newtheorem{corollary}[theorem]{Corollary}
\newtheorem{proposition}[theorem]{Proposition}
\newtheorem{conjecture}[theorem]{Conjecture}
\theoremstyle{definition}
\numberwithin{equation}{section}
\def\arrowfillCS#1#2#3#4{%
   \thickmuskip0mu\medmuskip\thickmuskip\thinmuskip\thickmuskip
   \relax#4#1\mkern-7mu%
   \cleaders\hbox{$#4\mkern-2mu#2\mkern-2mu$}\hfill
   \mkern-7mu#3
}
\def\lrfill{\arrowfillCS\leftarrow\relbar\rightarrow\relax}
\newcommand{\mnot}{\Tm}
\newcommand{\lrr}{\stackrel{r}{\lr}}
\renewcommand{\and}{\hbox{ {\rm and} }}
\newcommand{\off}{\hbox{ {\rm off} }}
\newcommand{\vep}{\varepsilon}
\renewcommand{\eps}{\varepsilon}
\newcommand{\epsm}{\eps_{\m}}
\newcommand{\epsn}{\eps_{n}}
\newcommand{\whp}{whp}
\newcommand{\sss}{\scriptscriptstyle}
\newcommand{\e}{{\mathrm e}}
\newcommand {\convd}{\stackrel{d}{\longrightarrow}}
\newcommand {\convp}{\stackrel{\sss {\mathbb P}}{\longrightarrow}}
\newcommand{\nn}{\nonumber}
\newcommand{\Park}{P_{r,r_0}}
\newcommand{\conn}{\longleftrightarrow}
\newcommand{\lbeq}[1]  {\label{e:#1}}
\newcommand{\refeq}[1] {\eqref{e:#1}}
\newcommand{\eqn}[1]{\begin{equation} #1 \end{equation}}
\newcommand{\eqan}[1]{\begin{align} #1 \end{align}}
\newcommand{\cluster}{{\mathcal C}}
\newcommand{\expec}{{\mathbb E}}
\def\1{{\mathchoice {1\mskip-4mu\mathrm l}      
{1\mskip-4mu\mathrm l}
{1\mskip-4.5mu\mathrm l} {1\mskip-5mu\mathrm l}}}
\newcommand{\indic}[1]{\1_{\{#1\}}}
\newcommand{\m}{{m}}
\begin{document}

\title{Unlacing the lace expansion:
a survey to hypercube percolation}
\author{Remco van der Hofstad}
\address{Department of Mathematics and
	Computer Science, Eindhoven University of Technology, P.O.\ Box 513,
	5600 MB Eindhoven, The Netherlands.}
\email{rhofstad@win.tue.nl}
\author{Asaf Nachmias}
\address{Department of Mathematics, University of British Columbia, 121-1984 Mathematics Rd, Vancouver, BC, Canada V6T1Z2.}
\email{asafnach@math.ubc.ca}


\begin{abstract} The purpose of this note is twofold. First, we survey the study of the percolation phase transition on the Hamming hypercube $\{0,1\}^\m$ obtained in the series of papers \cite{BCHSS1, BCHSS2, BCHSS3, HN12}. Secondly, we explain how this study can be performed without the use of the so-called ``lace-expansion'' technique. To that aim, we provide a novel simple proof that the triangle condition holds at the critical probability. We hope that some of these techniques will be useful to obtain non-perturbative proofs in the analogous, yet much more difficult study on high-dimensional tori.



%
\end{abstract}

\maketitle

\section{Introduction}
\label{sec-intro}

This paper is intended to be a companion to the papers \cite{BCHSS1,BCHSS2,BCHSS3,HN12} in the setting of percolation on the Hamming hypercube $\{0,1\}^m$. Our goal here is to present the most recent state of affairs of this topic, emphasizing ideas, techniques and the gaps in our understanding. We will present one novel proof of a result obtained in \cite{BCHSS2}, namely, that the triangle condition on the hypercube holds at the critical probability. This proof is simpler than the one presented in \cite{BCHSS2} as it does not use the {\em lace-expansion} technique. Given this estimate, it will be indicated in this note how the study of the qualitative properties of the phase transition on the hypercube can be obtained {\em without} the use of the lace expansion. While this has yielded a non-perturbative proof in the hypercube setting, ``unlacing'' the proofs in the setting of high-dimensional tori seems much more difficult and requires new ideas. We hope that the hypercube study will get us closer to this goal. \\


\ch{The paper is organized as follows. In Section \ref{sec-gnp}, we describe the phase transition in
the Erd\H{o}s and R\'{e}nyi random graph, our main source of inspiration and in Section \ref{sec-phase-trans-ncube} we describe the analogous results, obtained in \cite{BCHSS1, BCHSS2, HN12}, in the setting of the hypercube. We proceed in Section \ref{sec-bchss} to introduce and discuss the role of the so-called {\em triangle condition} for percolation. This condition arises fairly naturally in the study of percolation and to exemplify this, we present a classical argument of Barsky and Aizenman \cite{BA91} showing how to control the expected cluster size using the triangle condition. Next, in Section \ref{sec-rwconditions} we present some conditions about the behavior of the random walk on the underlying graph and state a general theorem that allows us to analyze the phase transition in percolation on any graph satisfying these conditions. In Section \ref{sec-backtohypercube} we restrict our attention back to the hypercube setting and verify that the random walk conditions of the previous section holds. In particular, we state there the required estimate on $p_c$ (Theorem \ref{thm-unlacing}) which we prove in this paper. We conclude this chapter in Section \ref{sec-open} by discussing open problems.

Section \ref{sec-overview} contains an overview of the proof of Theorem \ref{thm-super} and Section \ref{sec-unlacing} provides a proof of Theorem \ref{thm-unlacing} which together with the argument in Section \ref{sec-unif-conn-NBW} yields a simple proof that the triangle condition holds on the hypercube.}

\subsection{The Erd\H{o}s and R\'{e}nyi random graph}
\label{sec-gnp}
Recall that $G(n,p)$ is obtained from the complete graph by retaining
each edge of the complete graph on $n$ vertices with probability $p$
and erasing it otherwise, independently for all edges. Write $\C_j$ for
the $j$-th largest component obtained this way. An inspiring discovery of
Erd\H{o}s and R\'{e}nyi \cite{ER} is that this model exhibits a phase transition
when $p$ is scaled like $p=c/n$.
When $c<1$ we have $|\C_1|=\Theta(\log n)$ \whp\ and
$|\C_1|=\Theta(n)$ \whp{} when $c>1$. Here, we say that a sequence of events $E_n$
occurs \emph{with high probability} (\whp) when $\lim_{n\rightarrow \infty}\prob(E_n)=1$.
We further write $f(\m)=O(g(\m))$ if $|f(\m)|/|g(\m)|$ is uniformly bounded
from above by a positive constant, $f(\m)=\Theta(g(\m))$ if $f(\m)=O(g(\m))$ and
$g(\m)=O(f(\m))$, $f(\m)=\Omega(g(\m))$ if $1/f(\m)=O(1/g(\m))$ and $f(\m)=o(g(\m))$ if $f(\m)/g(\m)$ tends to $0$ with $\m$.

The investigation of the case $c$ close to $1$, initiated by Bollob\' as \cite{B1}
and further studied by {\L}uczak \cite{Lu},
revealed an intricate picture of the phase transition's nature. See
\cite{Bol85} for results up to 1984, and \cite{Aldo97,JKL93,JLR00,LPW94} for
references to subsequent work. We briefly describe these now. \\

\noindent {\bf \em The subcritical phase.} Let $\epsn=o(1)$ be a non-negative
sequence with $\epsn \gg n^{-1/3}$ and put $p=(1-\epsn)/n$, then, for any
fixed integer $j\geq 1$,
    $$
    \frac{|\C_j|}{2\epsn^{-2} \log(\epsn^3 n)}\convp 1\, ,
    $$
where $\convp$ denotes convergence in probability.
\medskip

\noindent {\bf \em The critical window.} When $p=(1 + O(n^{-1/3}))/n$,
for any fixed integer $j\geq 1$,
    $$
    \Big( {|\C_1| \over n^{2/3}}, \ldots, {|\C_j| \over n^{2/3}} \Big )
    \stackrel{d}{\longrightarrow} (\chi_1, \ldots, \chi_j) \, ,
    $$
where $(\chi_i)_{i=1}^j$ are random variables supported on $(0,\infty)$,
and $\convd$ denotes convergence in distribution.\newline

\noindent {\bf \em The supercritical phase.} Let $\epsn=o(1)$ be a non-negative sequence with
$\epsn \gg n^{-1/3}$ and put $p=(1+\epsn)/n$, then
    $$
    \frac{|\C_1|}{2\epsn n}\convp 1 \, ,
    $$
while, for any fixed integer $j \geq 2$,
    $$
    \frac{|\C_j|}{2\epsn^{-2} \log(\epsn^3 n)}\convp 1\, .
    $$
\medskip
%

\subsection{The phase transition on the hypercube}
\label{sec-phase-trans-ncube}


One inherent difficulty for percolation on the hypercube, \ch{or on any finite graph,} is that it is not obvious how to \emph{define} its critical value. \ch{In the Erd\H{o}s and R\'{e}nyi random graph this critical probability should turn out to be $(1+O(n^{-1/3}))/n$ in whichever definition we use!} When $p=(1+O(n^{-1/3}))/n$ in $G(n,p)$ we have that the mean cluster size is of order $n^{1/3}$. This inspired Borgs, Chayes, the first author, Slade and Spencer \cite{BCHSS1, BCHSS2, BCHSS3} to suggest that the precise location $p_c=p_c(\lambda)$ of the phase transition is the unique solution to the equation
    \be
    \label{pcdef}
    \E_{p_c} |\C(0)| = \lambda 2^{\m/3} \, .
    \ee
where $\C(0)$ is the connected component containing the origin, $|\C(0)|$ denotes its size, and $\lambda\in(0,1)$ denotes an arbitrary constant that is typically taken to be small. Here $2^{\m/3}$ can be viewed as the cube root of the volume of the graph, i.e., its number of vertices. \ch{There are several other more intuitive definitions (see the discussion in Section 7 of \cite{NP3}), however, in order to justify these definitions one needs to show that analogous results to the ones described in Section \ref{sec-gnp} holds with this definition. To the best of our knowledge this was only done with (\ref{pcdef}). Let us now describe the phase transition of percolation on the hypercube around this $p_c$.} \\
%
%
%

From here on, we take $p_c=p_c(\lambda)$ with $\lambda\in(0,1)$
a fixed constant. The phase transition on the hypercube is described in the
following three theorems, in all of which we consider bond percolation on the hypercube $\{0,1\}^\m$ with varying $p$.

\begin{theorem}[The subcritical phase \cite{BCHSS1, BCHSS2}]
\label{thm-sub}
Put $p=p_c(1-\epsm)$ where $\epsm=o(1)$ is a \ch{positive} sequence satisfying $\epsm \gg 2^{-\m/3}$. Then,
for all fixed $\delta>0$,
    	\eqn{
    	\prob_p\big(\epsm^{-2}/3600 \leq |\C_1|
	\leq (2+\delta)\epsm^{-2} \log(\epsm^3 2^{\m})\big) = 1-o(1) \, ,
    	}
and
	\eqn{
	\label{chi-asy-sub}
	\E_p|\C(0)|=\frac{1+o(1)}{|\epsm|}.
	}
\end{theorem}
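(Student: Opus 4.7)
The plan is to deduce all three statements from the triangle condition at $p_c$ (Theorem \ref{thm-unlacing}), which forces mean-field critical behavior for both the susceptibility and the tail of $|\C(0)|$.

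For the susceptibility asymptotics \eqref{chi-asy-sub}, the key input is the Aizenman--Newman differential inequality, which under the triangle condition sharpens to
\[
\frac{d}{dp}\,\chi(p)^{-1} = -(1+o(1))\,\m
\]
uniformly on $[0,p_c]$; here I use that $p_c\m\to 1$, itself a consequence of the triangle condition combined with branching-process bounds. Integrating from $p=p_c(1-\epsm)$ up to $p_c$ and using $\chi(p_c)^{-1}=\lambda^{-1}2^{-\m/3}=o(\epsm)$ (since $\epsm\gg 2^{-\m/3}$) together with $\m(p_c-p)=(1+o(1))\epsm$, yields $\chi(p)^{-1}=(1+o(1))\epsm$.

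For the upper bound on $|\C_1|$, I would derive an exponential tail bound
\[
\Pr_p\bigl(|\C(0)|\geq k\bigr) \leq \exp\bigl(-(1-o(1))k\epsm^2/2\bigr)
\]
valid up to $k=O(\epsm^{-2}\log(\epsm^3 2^\m))$. This can be obtained by applying the BK inequality to the exploration process of $\C(0)$: each step adds on average $\m p=1-(1+o(1))\epsm$ children with variance $\approx 1$, so the cluster size is stochastically dominated by the total progeny of a subcritical branching process with drift $\epsm$ and unit variance. A union bound over the $2^\m$ vertices then gives $|\C_1|\leq (2+\delta)\epsm^{-2}\log(\epsm^3 2^\m)$ whp. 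The lower bound uses a second-moment method: the triangle condition also yields $\E_p|\C(0)|^2\asymp \epsm^{-3}$, and a Paley--Zygmund-type inequality gives $\Pr_p(|\C(0)|\geq c\epsm^{-2})\geq c'\epsm$ for absolute constants $c,c'>0$ (tracking these constants produces the explicit $1/3600$). Hence the expected number of vertices in components of size at least $\epsm^{-2}/3600$ is $\geq c'\epsm\cdot 2^\m$, and since $\epsm\gg 2^{-\m/3}$ this is $\gg 2^{2\m/3}$; a second-moment argument on this count, again leveraging triangle-type bounds to control covariances between pairs of distinct vertices lying in large clusters, shows that the count is positive whp.

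The main obstacle is tracking sharp constants. The coefficient $1/2$ in the tail exponent (giving the factor $2+\delta$ in the theorem) requires that the triangle at $p_c$ is small enough that the exploration process genuinely has variance $1+o(1)$; any non-vanishing looseness in $\triangle(p_c)-1$ would inflate this coefficient and likewise inflate the $1/3600$ constant coming from the Paley--Zygmund step. It is therefore essential that Theorem \ref{thm-unlacing} is proven in a quantitatively sharp form (i.e.\ $\triangle(p_c)=1+o(1)$), not merely as a uniform bound, so that both the matching upper and lower tails of $|\C(0)|$ inherit the correct mean-field constants.
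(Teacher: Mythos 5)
Your treatment of \eqref{chi-asy-sub} is essentially the paper's: it is the Barsky--Aizenman differential-inequality argument reproduced in Section \ref{sec-bchss}, which gives $\chi(p)=\bigl(1+O(\max_{v\neq 0}\nabla_{p_c}(0,v))\bigr)/\bigl(\m(p_c-p)+\chi(p_c)^{-1}\bigr)$, and your observation that $\chi(p_c)^{-1}=\lambda^{-1}2^{-\m/3}=o(\epsm)$ is exactly how the paper extracts \eqref{chi-asy-sub} from that formula. For the two bounds on $|\C_1|$ the survey itself proves nothing: once the strong triangle condition is established it invokes Theorem \ref{thm-BCHSS1} from \cite{BCHSS1} as a black box. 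So on that part you are attempting more than the paper does, and this is where the trouble lies.

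The genuine gap is in your upper bound on $|\C_1|$. You assert that the exploration of $\C(0)$ is dominated by a branching process with mean $\m p=1-(1+o(1))\epsm$. But on the hypercube $\m p_c=1+1/\m+O(1/\m^2)>1$ (Theorem \ref{thm-asy-exp}), so $\m p=\m p_c(1-\epsm)=1+1/\m-\epsm+O(1/\m^2)$, which exceeds $1$ whenever $\epsm\ll 1/\m$ --- a regime fully allowed by the hypothesis $\epsm\gg 2^{-\m/3}$. In that regime the dominating binomial branching process is \emph{supercritical}, its total progeny is infinite with positive probability, and no bound of the form $\exp(-(1-o(1))k\epsm^2/2)$ can come out of it. The effective subcriticality of the cluster for $p<p_c$ is produced by loops and vertex depletion, which is precisely what the triangle condition quantifies; the sharp tail $\prob_p(|\C(0)|\geq k)\lesssim k^{-1/2}\exp(-k/(2\chi(p)^2))$ used in \cite{BCHSS1} is derived from differential inequalities for the magnetization under the triangle condition, not from direct stochastic domination. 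Your lower-bound sketch (second moment on the number of vertices in clusters of size at least $c\epsm^{-2}$, with covariances controlled by triangle-type diagrams) is the right outline and is indeed how \cite{BCHSS1} proceeds, but the inputs $\E_p|\C(0)|^2\asymp\epsm^{-3}$ and the explicit constant $1/3600$ again come from the tree-graph/triangle machinery rather than from a branching-process comparison, so the same correction is needed there.
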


\begin{theorem}[The critical window \cite{BCHSS1, BCHSS2}]
\label{thm-crit}
Put $p=p_c(1+\epsm)$ with $|\epsm|= O(2^{-\m/3})$. Then,
    	\eqn{
	\prob_p (\omega^{-1} 2^{2\m/3} \leq |\C_1| \leq \omega 2^{2\m/3})\geq  1 - O(\omega^{-1}) \, .
    	}
and
	\eqn{
	\label{chi-asy-pc}
	 \E_p |\C(0)| = \Theta(2^{\m/3}) \, .
	}
\end{theorem}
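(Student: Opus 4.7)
The plan is to derive both claims of Theorem \ref{thm-crit} from the triangle condition at $p_c$ (Theorem \ref{thm-unlacing}), combining differential inequalities for $\chi(p) = \E_p|\C(0)|$ with a first/second moment analysis of the counting variable $Z_{\geq k} = \sum_v \indic{|\C(v)|\geq k}$ of vertices lying in large clusters.

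First I would establish \eqref{chi-asy-pc}. By the defining equation \eqref{pcdef}, $\chi(p_c) = \lambda 2^{\m/3}$. The classical consequence of the triangle condition, going back to Aizenman-Newman and used in the Barsky-Aizenman argument \cite{BA91}, is the differential inequality $-\frac{d}{dp}\chi(p)^{-1} = O(1)$, valid uniformly in a neighborhood of $p_c$. Integrating from $p$ in the critical window up to $p_c$ and using $|p - p_c| \leq \epsm p_c = O(2^{-\m/3}/\m)$ gives $\chi(p)^{-1} = \chi(p_c)^{-1} + O(2^{-\m/3}/\m) = \Theta(2^{-\m/3})$, whence $\chi(p) = \Theta(2^{\m/3})$ throughout the window. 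The same mean-field framework yields the two-sided tail bound
\[
\prob_p(|\C(0)|\geq k) = \Theta(k^{-1/2}) \quad \text{for all } 1 \leq k \leq C\chi(p)^2,
\]
the upper bound being a direct output of the Aizenman-Barsky-Newman differential inequalities, and the lower bound following by a truncation argument from the identity $\chi(p) = \sum_{k\geq 1}\prob_p(|\C(0)|\geq k)$ combined with the matching upper bound.

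From here the bounds on $|\C_1|$ are short. For the upper tail I take $k = \omega 2^{2\m/3}$, so that $\E_p Z_{\geq k} = 2^{\m}\prob_p(|\C(0)|\geq k) = O(\omega^{-1/2} 2^{2\m/3})$, and Markov gives $\prob_p(|\C_1| \geq k) \leq \E_p Z_{\geq k}/k = O(\omega^{-3/2})$. For the lower tail I take $k = \omega^{-1} 2^{2\m/3}$, so that $\E_p Z_{\geq k} \asymp \omega^{1/2} 2^{2\m/3}$, and apply Chebyshev after the standard second moment split:
\[
\E_p[Z_{\geq k}^2] = 2^{\m}\E_p[|\C(0)|\indic{|\C(0)|\geq k}] + \sum_{u\neq v}\prob_p(|\C(u)|\geq k,\,|\C(v)|\geq k,\,u\not\in \C(v)).
\]
The first term is at most $2^{\m}\chi(p) = O(2^{4\m/3})$, while on the event in the second sum the clusters $\C(u)$ and $\C(v)$ are disjoint, so BK bounds that sum by $(\E_p Z_{\geq k})^2$. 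Hence $\Var(Z_{\geq k}) = O(2^{4\m/3}) = O(\omega^{-1})(\E_p Z_{\geq k})^2$, and Chebyshev yields $Z_{\geq k} \geq \tfrac{1}{2}\E_p Z_{\geq k} > 0$ (and therefore $|\C_1| \geq k$) with probability $1 - O(\omega^{-1})$.

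The main obstacle in this programme is propagating mean-field behaviour slightly above $p_c$: the differential inequality for $\chi^{-1}$ is classical only for $p < p_c$, and above $p_c$ one has to replace monotone arguments by a sprinkling comparison or by extending the triangle condition to a neighbourhood of $p_c$ via continuity. The other delicate point is the lower bound $\prob_p(|\C(0)|\geq k) \gtrsim k^{-1/2}$, which requires that the mean-field tail hold uniformly in $p$ across the window, and this is exactly where the triangle condition at $p_c$ proved in Theorem \ref{thm-unlacing} enters in an essential way.
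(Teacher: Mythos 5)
Your overall architecture is the right one, and it is essentially the one the paper relies on: the paper does not reprove Theorem \ref{thm-crit} but invokes Theorem \ref{thm-BCHSS1} (the triangle condition implies the critical-window behaviour, proved in \cite{BCHSS1} by exactly the first/second moment method on $Z_{\geq k}$ that you describe), and its own contribution is the lace-expansion-free verification of the triangle condition. Your derivation of \eqref{chi-asy-pc} for $p\leq p_c$ matches the Barsky--Aizenman computation reproduced in Section \ref{sec-bchss} (minor slip: $\frac{d}{dp}\chi\leq \m\chi^2$ gives $-\frac{d}{dp}\chi^{-1}=O(\m)$, not $O(1)$, but since $|p-p_c|=O(2^{-\m/3}/\m)$ the conclusion is unaffected), and your Markov bound for the upper tail of $|\C_1|$ and the BK-based variance bound are exactly the standard steps.

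There is, however, a genuine gap at the single most important input: the lower bound $\prob_p(|\C(0)|\geq k)\geq ck^{-1/2}$. The truncation argument you propose does not yield it. From $\lambda V^{1/3}=\chi(p)=\sum_{j\geq 1}\prob(|\C(0)|\geq j)\leq 2C\sqrt{k}+V\prob(|\C(0)|\geq k)$ one only gets, for $k=\delta V^{2/3}$ with $\delta$ small, $\prob(|\C(0)|\geq k)\geq \tfrac{\lambda}{2}V^{-2/3}\asymp k^{-1}$, which is smaller than $k^{-1/2}$ by a factor of order $V^{-1/3}$. Feeding this into your second-moment computation gives $(\E Z_{\geq k})^2=O(V^{2/3})$ against $\Var(Z_{\geq k})=O(V^{4/3})$, so Chebyshev/Paley--Zygmund produces a success probability tending to $0$ rather than to $1$; the required estimate $\E Z_{\geq k}\gtrsim \omega^{1/2}V^{2/3}$ is exactly equivalent to the $k^{-1/2}$ lower tail. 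In \cite{BCHSS1} this lower bound is one of the hardest parts of the proof: it is obtained from the Aizenman--Barsky differential inequalities for the magnetization $M(p,\gamma)$ (percolation with a ghost field), which give $M(p_c,\gamma)\asymp\sqrt{\gamma}$ and then the tail bound by a Tauberian-type argument --- not from the identity $\chi=\sum_k\prob(|\C|\geq k)$ and the matching upper bound. The second issue, which you correctly flag yourself, is extending all of these estimates to $p$ slightly above $p_c$ (where monotonicity arguments for $\nabla_p$ and for the differential inequalities no longer come for free); this is handled in \cite{BCHSS1} by working with the strong triangle condition \eqref{finitetriangle} uniformly over the window rather than only at $p_c$. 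As written, your proposal would need the magnetization analysis supplied before the second-moment step can close.
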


\begin{theorem}[The supercritical phase \cite{HN12}]
\label{thm-super}
Put $p=p_c(1+\epsm)$ where $\epsm=o(1)$ is a positive
sequence with $\epsm \gg 2^{-\m/3}$. Then
    \eqn{
    \frac{|\C_1|}{2\epsm 2^\m}\convp 1\, ,
    }
where $\convp$ denotes convergence in probability, and
    \eqn{
    \E_p |\C(0)| = (4+o(1))\epsm^2 2^\m \, .
    }
Furthermore, the second largest component $\C_2$ satisfies
    \eqn{
    \frac{|\C_2|}{\epsm 2^m}\convp 0 \, .
    }
\end{theorem}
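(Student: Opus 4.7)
The plan is to imitate the classical supercritical analysis of $G(n,p)$, replacing random-graph inputs with consequences of the triangle condition (Theorem~\ref{thm-unlacing}) and the edge-isoperimetric inequality on the hypercube. The key device will be \emph{sprinkling}: choose $p_1,p_2$ so that the union of two independent percolations at densities $p_1$ and $p_2$ realises percolation at $p$, with $p_1=p_c(1+\epsm/2)(1+o(1))$ playing the role of a ``first round'' at a slightly supercritical density and $p_2$ providing the sprinkling of order $\epsm\, p_c/2$. The first round will supply many mesoscopic clusters; the sprinkling will stitch them into one giant.

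\emph{Step 1: mesoscopic clusters.} Fix an intermediate size $k$ with $\epsm^{-2}\ll k \ll \epsm\,2^{\m}$. The first goal is
\[
\Pr_{p_1}(|\C(0)|\ge k) \;=\; (2+o(1))\,\epsm .
\]
For the upper bound one uses $k\,\Pr_{p_1}(|\C(0)|\ge k)\le \E_{p_1}|\C(0)|$ together with a sharp Aizenman--Barsky-type differential inequality for $\Pr(|\C(0)|\ge k)$, available because the triangle condition persists uniformly in a small supercritical window. For the lower bound one couples the breadth-first exploration of $\C(0)$ to a binomial branching process of mean $\m p_1 = 1+\epsm+o(\epsm)$, whose survival probability is $(2+o(1))\epsm$; the triangle condition controls the error coming from pruning edges that would close hypercube cycles during the exploration. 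Letting $Z_{\ge k}$ be the number of vertices in $p_1$-clusters of size at least $k$, this gives $\E_{p_1}Z_{\ge k}=(2+o(1))\epsm\,2^{\m}$.

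\emph{Step 2: concentration, sprinkling, uniqueness.} A second-moment estimate
\[
\E_{p_1}[Z_{\ge k}^2]\le (1+o(1))\,(\E_{p_1}Z_{\ge k})^2
\]
yields $Z_{\ge k}/(2\epsm\,2^{\m})\convp 1$; it reduces, via the BK inequality and the triangle condition, to the claim that for distinct $u,v$ the events $\{|\C(u)|\ge k\}$, $\{|\C(v)|\ge k\}$ and $\{u\not\leftrightarrow v\}$ are approximately independent. Conditioning on the $p_1$-cluster structure, I then sprinkle the $p_2$-edges: for two distinct $p_1$-clusters $\C',\C''$ each of size $\ge k$, the edge-isoperimetric inequality on $\{0,1\}^{\m}$ implies that, on typical $p_1$-configurations, the number of hypercube edges between $\C'$ and $\C''$ is at least $\Omega(k^2\m/2^{\m})$, so the probability they remain disconnected after sprinkling is at most $\exp(-\Omega(\epsm\, k^2/2^{\m}))=o(2^{-2\m})$. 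A union bound over pairs of vertices then forces all $p_1$-large clusters to merge into a single $p$-cluster of size $(2+o(1))\epsm\,2^{\m}$, and the same argument gives $|\C_2|/(\epsm 2^{\m})\convp 0$. For the susceptibility, the symmetry of the hypercube yields
\[
\E_p|\C(0)|=\frac{1}{2^{\m}}\sum_{\C}|\C|^2 = \frac{|\C_1|^2}{2^{\m}}+o(\epsm^2\,2^{\m}),
\]
where a subcritical-type tail bound for the off-giant configuration, extracted from Theorem~\ref{thm-sub} applied in the dual regime, ensures that components other than $\C_1$ contribute negligibly to $\sum_{\C}|\C|^2$.

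The main obstacle, in my view, is the sharp constant $2$ in Step~1: obtaining the leading-order asymptotic for $\Pr_{p_1}(|\C(0)|\ge k)$ requires matching upper and lower bounds that are tight to first order in $\epsm$. The lower bound demands a controlled branching-process coupling sustained over $\Theta(\epsm^{-2})$ generations, so that the cumulative pruning error remains $o(\epsm)$; the upper bound demands a tight Aizenman--Barsky differential inequality. Both rely essentially on the triangle condition being available \emph{uniformly throughout a small supercritical window} above $p_c$, not merely at the critical point itself, so the quantitative strength of Theorem~\ref{thm-unlacing} is what ultimately drives the whole argument.
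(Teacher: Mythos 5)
Your Steps 1 and 2 track the paper's actual route: the sharp tail asymptotics $\prob_{p_1}(|\C(0)|\geq k_0)=(2+o(1))\epsm$ is Theorem \ref{clustertail} (proved via sharpened Barsky--Aizenman differential inequalities), and the second-moment concentration of $Z_{\geq k}$ is Lemma \ref{lem-conc-Z-I}. The gap is in your sprinkling step. You assert that the edge-isoperimetric inequality on $\{0,1\}^{\m}$ gives $\Omega(k^2\m/2^{\m})$ hypercube edges between two disjoint clusters of size $\geq k$. No isoperimetric inequality gives this: two disjoint mesoscopic subsets of the hypercube of size $k\ll 2^{\m}$ can have \emph{zero} edges between them (e.g.\ sets at Hamming distance at least $2$ from one another). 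The quantity $k^2\m/2^{\m}$ is what two independent \emph{uniformly random} sets of size $k$ would share in expectation, and the statement that supercritical percolation clusters are spread out like uniform random sets is precisely the hard technical core of \cite{HN12} --- it is Theorem \ref{alphapairs}, whose proof needs the uniform connection estimates of Lemma \ref{moreunifconnbd} (connections longer than the non-backtracking-walk mixing time are approximately equidistributed), the counting variable $S_{r+r_0}(x,y)$, and conditioning on the intrinsic balls $B_x(r)$ with $r\gg\epsm^{-1}$ to defeat the branching-process-type non-concentration of $|B(r_0)|$. Your phrase ``on typical $p_1$-configurations'' conceals all of this. The legitimate deterministic input, Harper-type isoperimetry, only yields many disjoint \emph{paths of length} $\Theta(\sqrt{\m})$ between two sets of positive density, and the paper explains explicitly (Section \ref{sec-intsprinkling}) that this worst-case version is too wasteful in the regime $\epsm=o(1)$; that is exactly why the ``improved sprinkling'' was invented.

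Two smaller points. First, even granting your edge count, the per-pair failure probability $\exp(-\Omega(\epsm k^2/2^{\m}))$ is nowhere near $o(2^{-2\m})$ for the natural choice $k=k_0=\epsm^{-2}(\epsm^3 2^{\m})^{1/4}$ (one computes $\epsm k_0^2/2^{\m}=\epsm^{-3/2}2^{-\m/2}$, far below $\m$ when $\epsm\gg 2^{-\m/3}$); the paper instead union-bounds over the $2^{O((\epsm^3 V)^{3/4})}$ partitions of the large-cluster vertices and accumulates $\Omega(\epsm^2 V\m)$ closed edges across any such cut. Second, for the susceptibility you invoke a subcritical tail bound ``in the dual regime''; the discrete duality principle is listed in Section \ref{sec-open} as an open problem, so it is not available. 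The paper avoids it: the upper bound $\E_p|\C(0)|\leq(4+o(1))\epsm^2 V$ comes from Lemma \ref{lem-conc-Z-I}, and the lower bound plus $\sum_{j\geq 2}\E|\C_j|^2=o(\epsm^2V^2)$ then follow from $\E|\C_1|^2\geq(4-o(1))\epsm^2V^2$ and transitivity alone.
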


\ch{Theorems \ref{thm-sub} and \ref{thm-crit} are proved in \cite{BCHSS1, BCHSS2}. The work in \cite{BCHSS1, BCHSS2} did not provide sharp estimates for the supercritical phase and the authors conjectured (see Conjecture 3.2 in \cite{BCHSS3}) the statement of Theorem \ref{thm-super}, proved in \cite{HN12}. Thus, Theorems \ref{thm-sub}--\ref{thm-super} fully identify the phase transition and the critical window in the hypercube.}

%
%


\subsection{The role of the percolation triangle condition}
\label{sec-bchss}
Let us briefly review the study of random subgraphs of general finite transitive graphs
initiated in \cite{BCHSS1, BCHSS2}. Let $G$ be a finite transitive graph
and write $V$ for the number of vertices of $G$ and $\m$ for its degree.
Let $p\in [0,1]$ and write $G_p$ for the random graph obtained from $G$
by retaining each edge with probability $p$ and erasing it with probability $1-p$,
independently for all edges. We also write $\prob_p$ for this probability measure.
We say an edge is $p$-open ($p$-closed) if was retained
(erased). We say that a path in the graph is
$p$-open if all of its edges are $p$-open. For two
vertices $x,y$ we write $x \lr y$ for the event that there exists a $p$-open path
connecting $x$ and $y$. For an integer $j \geq 1$ we write $\C_j$ for the $j$-th largest
component of $G_p$ (breaking ties arbitrarily) and for a vertex $v$ we write $\C(v)$
for the component in $G_p$ containing $v$.

For two vertices $x,y$ we denote
    \be \label{nabledef}
    \nabla_p(x,y) = \sum_{u,v} \prob_p(x \lr u)\prob_p(u \lr v)\prob_p(v \lr y) \, .
    \ee
The quantity $\nabla_p(x,y)$, known as the {\em triangle diagram}, was introduced by
Aizenman and Newman \cite{AN} to study critical percolation on high-dimensional infinite
lattices. In that setting, the important feature of an infinite graph $G$ is
whether $\nabla_{p_c}(0,0)<\infty$.
This condition is often referred to as the {\em triangle condition}.
In high-dimensions, Hara and Slade \cite{HS90}
proved \ch{that} the triangle condition \ch{holds}. It allows to deduce
that numerous critical exponents attain the same values as they do on an infinite regular
tree, see e.g.\ \cite{ AN, BA91, KN08, KN11}.

When $G$ is a finite graph, $\nabla_p(0,0)$ is obviously finite, however,
there is still a {\em finite} triangle condition which in turn guarantees that
random critical subgraphs of $G$ have the same geometry as random
subgraphs of the complete graph on $V$ vertices, where $V$ denotes
the number of vertices in $G$. That is, in the finite setting the role of
the infinite regular tree is played by the complete graph. Let us make this
heuristic formal.

We always have that $V\to \infty$ and that $\lambda\in(0,1)$ is a
fixed and small constant. Let $p_c=p_c(\lambda)$ be defined by
    \be
    \label{pcdefgeneral}
    \E _{p_c(\lambda)} |\C(0)| = \lambda V^{1/3} \, .
    \ee
The finite triangle condition is the assumption that
$\nabla_{p_c(\lambda)}(x,y) \leq {\bf 1}_{\{x=y\}} + a_0$,
for some $a_0=a_0(\lambda)$ sufficiently small.
The strong triangle condition, defined in  \cite[(1.26)]{BCHSS2},
is the statement that there exists a constant $C$ such that
for all $p\leq p_c$,
    \be
    \label{finitetriangle}
    \nabla_{p}(x,y) \leq {\bf 1}_{\{x=y\}} + {C\chi(p)^3 \over V} + o(1),
    \ee
where $o(1)$ tends to $0$ as $\m\to \infty$.
In \cite{BCHSS2}, \eqref{finitetriangle} is shown to hold for
various graphs: the complete graph,
the hypercube and high-dimensional tori $\Z_n^d$. Its proof relies on the \emph{lace expansion},
a perturbative technique to investigate the two-point function $\prob_{p_c}(0\conn x)$ that was first used for percolation on the high-dimensional infinite lattice by Hara and Slade \cite{HS90}.
The lace expansion is an extremely powerful technique, but is also quite involved.
We feel that it should only be used when more elementary techniques fail.
Apart from surveying the literature on hypercube percolation, our aim in this paper
is to show that  Theorems \ref{thm-sub}-\ref{thm-super} can be proved \emph{without}
relying on the lace expansion.

\ch{The main result of \cite{BCHSS1} is that the triangle condition implies strong estimates on $|\C_1|$ in the critical and subcritical case:}

\begin{theorem}[\cite{BCHSS1}]
\label{thm-BCHSS1}
Consider bond percolation on a finite transitive graph $G$ having $V$ vertices and
degree $\m$ satisfying the strong triangle condition \eqref{finitetriangle} where
$\m\rightarrow \infty$ as $V\rightarrow \infty$.
Then the assertions of Theorems \ref{thm-sub} and \ref{thm-crit} hold when each
occurrence of $2^{\m}$ is replaced with $V$.
\end{theorem}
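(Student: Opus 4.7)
The plan is to use the strong triangle condition \eqref{finitetriangle} to close two differential inequalities for the susceptibility $\chi(p) = \E_p|\C(0)|$, and then to upgrade the resulting asymptotic for $\chi(p)$ to control of $|\C_1|$ via moment methods.

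First I would establish, by Russo's formula together with the BK/van den Berg--Kesten inequality, the Aizenman--Newman/Barsky--Aizenman differential inequality sketched in Section \ref{sec-bchss},
\[
\m\chi(p)^2 \bigl(1 - \nabla_p(0,0)\bigr) \;\leq\; \frac{d}{dp}\chi(p) \;\leq\; \m\chi(p)^2 \bigl(1 + o(1)\bigr),
\]
where both bounds hold under \eqref{finitetriangle} provided $\chi(p)^3/V = o(1)$. A calibration using $\chi(p_c) = \lambda V^{1/3}$ and the triangle then forces $\m p_c = 1 + o(1)$. Integrating between $p = p_c(1 - \epsm)$ and $p_c$ yields $\chi(p) = (1+o(1))/\epsm$, which is \eqref{chi-asy-sub}; combining the definition \eqref{pcdefgeneral} with the same inequality gives $\chi(p) = \Theta(V^{1/3})$ throughout the critical window $|\epsm| = O(V^{-1/3})$, which is \eqref{chi-asy-pc}.

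Next I would pass from $\chi(p)$ to bounds on $|\C_1|$. In the critical window, I would estimate the higher moments
\[
\E_p |\C(0)|^k = \sum_{x_1,\dots,x_{k-1}} \prob_p(0 \lr x_1, \ldots, 0 \lr x_{k-1})
\]
via tree-graph/BK inequalities and the triangle condition, obtaining $\E_{p_c}|\C(0)|^k \leq C_k V^{(2k-1)/3}$. From the first two moments one extracts the tail $\prob_{p_c}(|\C(0)| \geq t) \asymp t^{-1/2}$ for $1 \ll t \ll V^{2/3}$. Applied to $Z_t = \#\{v : |\C(v)| \geq t\}$, whose mean is $\Theta(V t^{-1/2})$ and whose variance is controlled by a further triangle computation, Paley--Zygmund pins $|\C_1|$ between $\omega^{-1} V^{2/3}$ and $\omega V^{2/3}$ with probability $1 - O(\omega^{-1})$. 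In the subcritical regime, a branching-process comparison along the exploration process gives the Cram\'er-type tail $\prob_p(|\C(0)| \geq t) \leq e^{-(1+o(1))\epsm^2 t / 2}$ (with the triangle again absorbing self-intersection corrections); a union bound over starting vertices, sharpened by the first-moment count of cluster-representatives (of which there are roughly $V/\chi(p) = V\epsm$), yields the upper bound $|\C_1| \leq (2+\delta)\epsm^{-2}\log(\epsm^3 V)$. The matching lower bound $|\C_1| \geq \epsm^{-2}/3600$ then follows from a Paley--Zygmund argument on $Z_{c\epsm^{-2}}$ using the asymptotic from the first step.

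The main obstacle is extracting \emph{sharp} constants rather than mere orders of magnitude. A crude $O(1)$ bound on $\nabla$ would easily deliver qualitative versions of each assertion, but the leading constant in $\chi(p) \sim 1/|\epsm|$, the prefactor $2$ in the subcritical log-correction, and the second-moment ratio needed in the critical window all hinge on $\nabla_{p_c}(x,y) - \indic{x=y}$ being genuinely $o(1)$ uniformly in the graph size --- exactly what \eqref{finitetriangle} delivers. A further delicate point is ensuring that the finite-volume remainder $C\chi(p)^3/V$ in \eqref{finitetriangle} stays absorbable into the error terms all the way to the edge of the window; this forces $\chi(p)^3 = o(V)$, which is precisely why one must restrict to $\epsm \gg V^{-1/3}$.
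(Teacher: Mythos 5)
This theorem is imported from \cite{BCHSS1}; the survey itself does not prove it, and the only piece it derives in detail is the susceptibility asymptotics \eqref{chi-asy-sub} and \eqref{chi-asy-pc} via the Barsky--Aizenman differential inequalities of Section \ref{sec-bchss}. Your first step reproduces that part faithfully (note that the BK upper bound is the clean $\frac{d}{dp}\chi(p)\leq \m\chi(p)^2$ with no error term; the triangle condition is only needed to close the reverse inequality), and integration indeed yields \refeq{chi(p)-asy} and hence both susceptibility statements, together with $\m p_c=1+O(a_0)$. The remainder of your outline --- tree-graph moment bounds, first/second-moment analysis of $Z_{\geq t}$, and a branching-process comparison with a union bound in the subcritical phase --- is the correct architecture of the actual proof in \cite{BCHSS1} and goes beyond anything this paper carries out.

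One step is genuinely underspecified. You assert that ``from the first two moments one extracts the tail $\prob_{p_c}(|\C(0)|\geq t)\asymp t^{-1/2}$ for $1\ll t\ll V^{2/3}$.'' Two moments give the upper bound $\prob_{p_c}(|\C(0)|\geq t)\leq Ct^{-1/2}$ and, via Paley--Zygmund applied to $|\C(0)|$ itself, a lower bound only at the single scale $t\asymp\chi(p_c)\asymp V^{1/3}$; they do not give the matching lower bound at all scales $t\ll V^{2/3}$, which is precisely what the second-moment method on $Z_{\geq \omega^{-1}V^{2/3}}$ requires in order to produce $\prob(|\C_1|\geq\omega^{-1}V^{2/3})\geq 1-O(\omega^{-1})$. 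In \cite{BCHSS1} this lower tail is extracted from the Barsky--Aizenman differential inequalities for the magnetization $M(p,\gamma)=1-\E_p[(1-\gamma)^{|\C(0)|}]$, and this is the second place where the triangle condition does heavy lifting; an alternative is to run Paley--Zygmund at a sequence of slightly subcritical $p$ and use monotonicity, but some such device must be named. With that ingredient supplied, and with $\Var(Z_{\geq t})$ controlled by the standard decoupling $\E[Z_{\geq t}^2]\leq(\E Z_{\geq t})^2+V\,\E\big[|\C(0)|\indic{|\C(0)|\geq t}\big]$, your plan matches the cited proof.
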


The triangle condition is significant since it arises naturally in various calculation one performs. To indicate how this occurs, we provide here a classical argument of Barsky and Aizenman \cite{BA91} controlling the size of $\E_p|\C(0)|$ as $p$ varies. We put $\chi(p)=\expec_p[|\cluster(0)|]$ and will show that the strong triangle condition \eqref{finitetriangle} implies that for all $p<p_c$,
	\eqn{
	\lbeq{chi(p)-asy}
	\chi(p)=\frac{1+O(\max_{v\neq 0}\nabla_{p_c}(0,v))}{\m(p_c-p)+\chi(p_c)^{-1}}.
	}
In particular, \refeq{chi(p)-asy} implies that $\chi(p)=\Theta(V^{1/3})$ whenever
$p\leq p_c$ is in the scaling window, i.e., when $\m(p_c-p)=O(V^{-1/3})$
as stated in \eqref{chi-asy-pc} in Theorem \ref{thm-crit}. Equation \refeq{chi(p)-asy}
also proves \eqref{chi-asy-sub} in Theorem \ref{thm-sub}.

We start by proving the upper bound in \refeq{chi(p)-asy}. \ch{We remark that this bound is valid for {\em all} transitive graphs (that is, we do not require here the triangle condition).} By Russo's formula,
	\eqn{
	\frac{d}{dp}\prob_p(0\conn x)=\sum_{(u,v)\in \Edges}
	\prob_p\big((u,v) \text{ is pivotal for }0\conn x\big),
	}
where we say that a (directed) bond $(u,v)$ is \emph{pivotal} for $0 \conn x$ when (a) $0\conn u$ and
(b) $0\conn x$ in the
(possibly modified) configuration where the status of $(u,v)$ is turned to occupied, while
$0$ is not connected to $x$ in the (possibly modified) configuration where the status of $(u,v)$
is turned to vacant. Summing over $x$ yields
    \eqn{
    \lbeq{der-chi-expr}
    \frac{\mathrm{d}}{\mathrm{d}p} \chi(p) = \sum_{x\in \Vertices}
    \sum_{(u,v)\in \Edges} \prob_p\big((u,v) \text{ is pivotal for }
    0\conn x\big).
    }

If $(u,v)$ is pivotal for $0\conn x$, then
there exist two disjoint paths of occupied bonds connecting
$0$ and $u$, and $v$ and $x$, respectively. Thus,
$\{0\conn u\}\circ \{v\conn x\}$ occurs. \ch{The} BK inequality \cite{Grim99} gives
    \eqn{
    \lbeq{der-bd}
    \frac{\mathrm{d}}{\mathrm{d}p} \chi(p) \leq \sum_{x\in \Vertices}
    \sum_{(u,v)\in \Edges} \prob_p(0\conn u)\prob_p(v\conn x)
    =\m \chi(p)^2.
    }
We rewrite the last inequality as $\frac{\mathrm{d}}{\mathrm{d}p} \chi(p)^{-1} \geq -\m$, and integrate over
$[p,p_c]$ to get
    \eqn{
    \lbeq{integrate}
    \chi(p_c)^{-1}-\chi(p)^{-1}\geq -\m(p_c-p),
    }
so that
    \eqn{
    \chi(p)\geq \frac{1}{\m(p_c-p)+\chi(p_c)^{-1}} \, ,
    }
showing the upper bound in \refeq{chi(p)-asy}. For the lower bound we write
    \eqn{
    \lbeq{lace-1}
    \prob_p((u,v) \text{ is pivotal for }
    0\conn x)
    =\expec_p[\indic{0\conn u}\tau^{\tilde \C^{(u,v)}(0)}(v,x)],
    }
where $\tilde \C^{(u,v)}(0)$ consists of those sites which are connected
to 0 without using the bond $(u,v)$ and for a set of sites $A$, the restricted two-point function
$\tau^{A}(v,x)$ is the probability that $v$ is connected to $x$ and every open path from $v$ to $x$ has all its edges not touching $A$. Clearly,
$\tau^{\tilde \C^{(u,v)}(0)}(v,x)\leq \tau(v,x)$, and
this is in fact an easy way to prove BK inequality for this particular events. We note that
    \eqn{
    \lbeq{lace-2}
    \prob_p(v\conn x)-\tau^{A}(v,x)=\prob_p(v\ct{A} x),
    }
where we write that $v\ct{A} x$ when every open path from $v$ to $x$ has an edge touching $A$. Thus,
    \eqan{
    \frac{\mathrm{d}}{\mathrm{d}p} \chi(p) &= \sum_{x\in \Vertices}
    \sum_{(u,v)\in \Edges}\expec_p[\indic{0\conn u}] \prob_p(v \lrfill x)
    \nonumber\\
    &\qquad-
    \sum_{x\in \Vertices}
    \sum_{(u,v)\in \Edges}\expec_p[\indic{0\conn u}\prob_p(v\ct{\tilde \C^{(u,v)}(0)} x)]\nonumber\\
    &=\m \chi(p)^2-
    \sum_{x\in \Vertices}
    \sum_{(u,v)\in \Edges}\expec_p[\indic{0\conn u}\prob_p(v\ct{\tilde \C^{(u,v)}(0)} x)].
    }
Now, for any $A\subseteq \Z^d$,
    \eqn{
    \lbeq{through-bd}
    \prob_p(v\ct{A} x)
    \leq \sum_{a} \prob_p\big(\{v\conn a\}\circ \{a\conn x\}\big)\indic{a\in A},
    }
which by BK inequality and summing over $x$ leads to
    \eqan{
    \frac{\mathrm{d}}{\mathrm{d}p} \chi(p)
    &\geq \m \chi(p)^2-
    \chi(p)
    \sum_{(u,v)\in \Edges}\sum_{a} \prob_p(0\conn u, a\in \tilde \C^{(u,v)}(0)) \prob_p(v\conn a)\\
	&\geq \m \chi(p)^2-
    \chi(p)
    \sum_{(u,v)\in \Edges}\sum_{a} \prob_p(0\conn u, 0 \conn a)\prob_p(v\conn a).\nn
    }
If $0 \conn u$ and $0 \conn a$, then there exists $z$ such that $\{0 \conn z\}\circ\{z \conn u\}\circ\{z \conn z\}$, so by the BK inequality
   \eqan{
    \frac{\mathrm{d}}{\mathrm{d}p} \chi(p)
    &\geq \m \chi(p)^2-
    \chi(p) \sum_{(u,v)\in \Edges}\sum_{a,z} \prob_p(0\conn z)\prob_p(z\conn u)\prob_p(z\conn a)\prob_p(v\conn a) \, . \nn
    }
The sum over $a,z$ looks almost like the triangle diagram, except for the the annoying $\prob_p(0\conn z)$ factor. However, by transitivity the double sum on the right hand side remains the same if we replace $0$ by any other vertex. Hence we may sum this over $0$, getting a factor of $\chi(p)$, and add a factor of $V^{-1}$. This gives that
$$\frac{\mathrm{d}}{\mathrm{d}p} \chi(p) \geq \m \chi(p)^2 - \m \chi(p)^2 \sum_{(0,v)\in \Edges} \nabla_p(0,v)\leq \m \chi(p) \max_{v\neq 0}\nabla_p(0,v) \, ,$$
implying that
    \eqn{
    \frac{\mathrm{d}}{\mathrm{d}p} \chi(p)\geq \m\chi(p)^2 [1-\max_{v\neq 0}\nabla_{p_c}(0,v)].
    }
We now integrate as we do in \refeq{integrate} and obtain the lower bound in \refeq{chi(p)-asy}.
\bigskip

\subsection{Random walk conditions for percolation} \label{sec-rwconditions}
We now describe a general theorem, obtained in \cite{HN12}, which allows to deduce as corollaries Theorems \ref{thm-sub}-\ref{thm-super} under certain geometric conditions on the underlying graphs. These conditions are more restrictive than the triangle condition, (for instance, they do not hold in the case of high-dimensional tori, but do hold for the hypercube) but are easier to verify since they are expressed in terms of random walks. In particular, these conditions imply the strong triangle condition (and hence by Theorem \ref{thm-BCHSS1} they imply Theorems \ref{thm-sub} and \ref{thm-crit}), but more importantly they allow us to analyze percolation in the supercritical case, where the triangle condition ceases to hold, and obtain Theorem \ref{thm-super}.

Let $G$ be a finite transitive graph on $V$ vertices and degree $\m$. Consider the non-backtracking random walk (NBW) on it (this is just a simple random walk not allowed to traverse back on the edge it just came from). For any two vertices $x,y$, we put $\p^t(x,y)$ for the probability that the walk started at $x$ visits $y$ at time $t$. We write $\Tm$ for the {\em uniform mixing time} of the walk, that is,
    \eqn{
    \Tm = \min \Big \{ t \colon  \max_{x,y} \,\,{\p^t(x,y) + \p^{t+1}(x,y) \over 2}
    \leq (1+o(1)) V^{-1} \Big \} \, ,
    }
where $o(1)$ tends to $0$ slowly. Then the main result in \cite{HN12} is as follows:

\begin{theorem}[\cite{HN12}]
\label{mainthmgeneral}
Let $G$ be a transitive graph on $V$ vertices with degree $\m$ and define $p_c$ as in (\ref{pcdef})
with $\lambda=1/10$. Assume that the following conditions hold:
\begin{enumerate}
\item $\m \to \infty$,
\item $[p_c(\m-1)]^{\Tm} = 1 + o(1)$,
\item For any vertices $x,y$,
    \eqn{
    \sum_{u,v} \sum_{\substack{t_1,t_2,t_3=0\\t_1+t_2+t_3\geq 3}}^{\Tm} \p^{t_1}(x,u) \p^{t_2}(u,v) \p^{t_3}(v,y) =o(1/\log{V}).
    }
\end{enumerate}
Then,
\begin{enumerate}
\item[(a)] the finite triangle condition (\ref{finitetriangle}) holds (and hence the assertions of
Theorems \ref{thm-sub}-\ref{thm-crit} hold),
\item[(b)] for any sequence $\eps=\epsm$ satisfying $\epsm \gg V^{-1/3}$ and $\epsm = o(\Tm^{-1})$,
    \eqn{
    \frac{|\C_1|}{2\epsm V}\convp 1 \,  , \quad\qquad \E_p |\C(0)| = (4+o(1))\epsm^2 V \,  , \quad\qquad \frac{|\C_2|}{\epsm V}\convp 0 \, .
    }
\end{enumerate}
\end{theorem}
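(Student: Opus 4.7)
The strategy is to decouple parts (a) and (b). Part (a)---the strong triangle condition \refeq{finitetriangle}---is derived directly from the random walk conditions (1)--(3) without the lace expansion, and Theorem \ref{thm-BCHSS1} then delivers the subcritical and critical assertions of Theorems \ref{thm-sub}--\ref{thm-crit}. Part (b)---the supercritical regime---then follows from (a) together with a sprinkling argument and a careful second-moment calculation, using \refeq{chi(p)-asy} to control $\chi(p)$ on the way up.

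For part (a), the idea is to dominate the two-point function at $p\leq p_c$ by the non-backtracking walk Green's function. Every open connection contains an open self-avoiding path, which is in particular non-backtracking, so a union bound gives
\eqn{
\prob_p(x\conn y) \leq \mathbf{1}_{x=y} + \frac{\m}{\m-1}\sum_{t\geq 1}[p(\m-1)]^t\,\p^t(x,y).
}
Splitting the sum at $t=\Tm$ and using condition (2) to bound the prefactor $[p_c(\m-1)]^t$ by $1+o(1)$ for $t\leq\Tm$, plus mixing for the tail, this reduces to
$\prob_p(x\conn y)\leq \mathbf{1}_{x=y}+\sum_{t=1}^{\Tm}\p^t(x,y)+O(\chi(p_c)/V)$.
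Substituting this decomposition into each factor of $\nabla_p(x,y)$ and expanding, the leading contribution is precisely the triple NBW sum of condition (3), and is therefore $o(1/\log V)$, while the cross terms contribute $O(\chi(p_c)^3/V)$. To turn the naive NBW upper bound into a $1+o(1)$ estimate one runs a bootstrap in $p$: assume a candidate upper bound on $\prob_p(x\conn y)$ on an interval below $p_c$, improve the constant via a Simon--Lieb-type recursion derived from the BK inequality, and close the loop by continuity of $\chi$ in $p$.

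For part (b), given (a), the plan is a sprinkling argument. Decompose $p_c(1+\epsm)$-percolation as the union of independent $p_1$- and $p_2$-percolations with $p_1=p_c(1+\epsm/2)$ and $p_2\approx p_c\epsm/2$; the assumption $\epsm=o(\Tm^{-1})$ guarantees that the random walk hypotheses (2)--(3) persist at $p_1$, so the triangle condition remains available there. A first and second moment computation on the number $Z$ of vertices in $p_1$-clusters of size at least $K\epsm^{-2}\log(\epsm^3 V)$, with variances controlled by the triangle condition, yields $Z=(2+o(1))\epsm V$ \whp. The sprinkle then merges all these moderately large clusters into a single giant: for any two clusters $C,C'$ of size $\Omega(\epsm V)$, mixing forces $\Omega(\m|C||C'|/V)$ candidate $p_2$-edges between them, each open with probability $\Omega(p_c\epsm)$, giving merger probability $1-o(V^{-2})$ uniformly in the pair. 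This delivers $|\C_1|/(2\epsm V)\convp 1$ and $|\C_2|/(\epsm V)\convp 0$, from which $\E_p|\C(0)|=(4+o(1))\epsm^2V$ follows by splitting the expectation at threshold $\eta\epsm V$ and using the triangle condition to bound the contribution of the small clusters.

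The main obstacle is the bootstrap in part (a): producing a self-improving inequality for $\prob_p(x\conn y)$ that beats the crude NBW upper bound by a factor $1+o(1)$ uniformly for $p\leq p_c$, and correctly identifying the triple NBW structure of condition (3) as the leading combinatorial term inside $\nabla_p(x,y)$ while controlling the many cross terms uniformly in $p$. Once (a) is in hand, the sprinkling and second-moment ingredients of (b) are largely mechanical given \refeq{chi(p)-asy}, Theorem \ref{thm-crit}, and the control on $\Tm$ afforded by condition (2).
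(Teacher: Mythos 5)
Your overall architecture matches the paper's (NBW path-counting plus conditions (2)--(3) for the triangle condition; sprinkling for the supercritical phase), but two steps you treat as routine are exactly where the content lies, and as written they fail. In part (a), ``mixing for the tail'' does not control connections of length greater than $\Tm$. Path counting gives $\prob_p(x\stackrel{=t}{\lrfill}y)\leq \m(\m-1)^{t-1}p^t\,\p^t(x,y)$, and for $t>\Tm$ the factor $[p(\m-1)]^t$ is not controlled by condition (2), which only constrains $[p_c(\m-1)]^{\Tm}$; since $p(\m-1)$ may exceed $1$, the tail of your Green's-function sum can blow up. The paper handles long connections by a genuinely percolation-theoretic estimate, Lemma \ref{moreunifconnbd}, which bounds $\prob_{p}(x\stackrel{[\Tm,\infty)}{\lrfill}y)$ by $(1+o(1))\chi(p)/V$ using the defining property $\chi(p_c)=\lambda V^{1/3}$. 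You need that lemma (or an equivalent), and once you have it no bootstrap in $p$ is required: the proof of (a) is a direct split into short connections (path counting, then conditions (2) and (3)) and long connections (the uniform bound), so the ``main obstacle'' you identify is not actually present.

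In part (b), the assertion that ``mixing forces $\Omega(\m|C||C'|/V)$ candidate $p_2$-edges between'' two large clusters is the crux of \cite{HN12}, not a consequence of mixing. Arbitrary vertex sets of that size can have far fewer connecting edges; isoperimetry only guarantees many disjoint \emph{paths} of length of order $\sqrt{\m}$, which is useless when $p_2=\theta\epsm/\m$ with $\epsm=o(1)$. One must prove that percolation clusters behave like uniformly random sets of their size: this is Theorem \ref{alphapairs}, whose proof requires the good-pairs construction, coping with the non-concentration of $|B(r_0)|$ by conditioning on the whole of $B_x(r)$ and $B_y(r)$ with $r\gg\epsm^{-1}$ so that $S_{r+r_0}(x,y)$ concentrates, and then a union bound over \emph{partitions} of the vertices lying in moderately large clusters (of size about $\epsm^{-2}(\epsm^3V)^{1/4}$, not $\Omega(\epsm V)$), rather than over pairs of clusters; the edge-sharing property holds for most pairs, not uniformly in the pair. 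Your proposal does not contain this idea, and without it the merger step does not go through.
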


In Section \ref{sec-overview} we will part (a) of the Theorem above, and in Section \ref{sec-unlacing} we will verify the conditions of the Theorem. Hence, we will obtain a proof that the triangle condition holds on the hypercube. Note that condition (2) involves both a random walk estimate (bounding $\Tm$) and a percolation estimate (bounding $p_c$). Let us now discuss how the verification of these conditions is done in a rather elementary way.

\subsection{Back to the hypercube}\label{sec-backtohypercube} It is a classical fact that the total-variation mixing time of the random walk on the hypercube is of order $\m \log{\m}$ \cite{LPW09}. A separate argument is needed to show that this is the correct order for $\Tm$ since (a) we are dealing with the non-backtracking walk; and (b) we require a bound on the stronger uniform mixing time. This can be done by analyzing the transition matrix of the non-backtracking random walk using classical tools. This analysis is performed by the Fitzner and the first author in \cite{FitHof11a} and also allows us to verify condition (3) of Theorem \ref{mainthmgeneral}. We will not delve further into this part of the proof.

Thus, given that $\Tm = \m \log{\m}$, the verification of condition (2) in Theorem \ref{mainthmgeneral} in the case of the hypercube simply states that $p_c = {1 \over m-1} + o(m^{-2} \log m)$. This estimate (and more) was already proved in the work of the first author and Slade \cite{HS05, HS06} so no further estimates on $p_c$ were required in \cite{HN12}. However, the estimate we require is much weaker and the proofs in \cite{HS05, HS06} are difficult and rely on the lace expansion. In this paper we provide an elementary argument giving this estimate. This is the last piece in the ``unlacing'' puzzle which verifies condition (2) of Theorem \ref{mainthmgeneral} in the case of the hypercube.

\begin{theorem}[Unlacing the lace expansion in the hypercube]
\label{thm-unlacing}
Consider bond percolation on the hypercube $\{0,1\}^{\m}$. Then, there exists $C>0$ such that
	\eqn{
	p_c\leq {1+{5/(2\m^2)}+{C/\m^{-3}} \over \m-1} \, .
	}
Consequently, $[(\m-1)p_c]^{\Tm}=1+o(1)$, so that the results in
Theorems \ref{thm-BCHSS1} and \ref{mainthmgeneral} apply
(and hence also the assertions of Theorems \ref{thm-sub}-\ref{thm-super}).
\end{theorem}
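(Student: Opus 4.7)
Proof plan.

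To prove Theorem \ref{thm-unlacing} it suffices, by the definition \eqref{pcdef} of $p_c$ and monotonicity of $\chi(p)$ in $p$, to exhibit $p_*=(1+5/(2\m^2)+C/\m^3)/(\m-1)$ with the property that $\chi(p_*)\geq \lambda 2^{\m/3}$; the consequence $[(\m-1)p_c]^{\Tm}=1+o(1)$ then follows from $\Tm=\Theta(\m\log\m)$ together with the elementary bound $(1+O(1/\m^2))^{\m\log\m}=1+o(1)$.

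The main step is therefore a lower bound on $\chi(p_*)$, and the plan is to obtain it by coupling a non-backtracking breadth-first exploration of the cluster $\cluster(0)$ with a Galton--Watson branching process. Using only edges to neighbors not yet revealed, each exploration vertex contributes $\mathrm{Bin}(\m-1,p)$ new children, up to collisions with already-reached vertices. The unconditional offspring mean is $(\m-1)p_*=1+5/(2\m^2)+C/\m^3$. On the hypercube the leading collision cost comes from closed walks of length $4$ (the girth): a second-order expansion around the pure branching rate shows that such collisions eat into the effective mean by a defect of order $5/(2\m^2)$, exactly matching the correction built into $p_*$. The strict excess $C/\m^3$ in $p_*$ is chosen to more than compensate for this defect, so that the effective offspring mean is $1+\Omega(C/\m^3)$.

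Once the exploration dominates a slightly supercritical branching process with excess mean $\delta=\Omega(1/\m^3)$, the argument proceeds in two stages. In the first stage a direct coupling with a Galton--Watson tree yields a survival probability of order $\delta$ for the exploration reaching some generation $T_0=o(\Tm)$ with cluster size exceeding $1/\delta$; during this stage the cumulative collision rate, bounded uniformly by the NBW Green-function estimates of Section \ref{sec-unif-conn-NBW}, remains $o(1)$, so the coupling is valid. In the second stage, conditional on having reached size $1/\delta$, a sprinkling/isoperimetric argument using the uniform expansion of the hypercube (equivalently, the uniform mixing of the NBW used in Theorem \ref{mainthmgeneral}) shows that the cluster then grows to size $\Omega(2^{\m})$. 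Combined, these two stages give $\chi(p_*)\gtrsim \delta\cdot 2^{\m}\gg 2^{\m/3}$, as required.

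The hard part will be the sharp identification of the leading collision constant $5/(2\m^2)$: the argument must reproduce precisely the ``triangle'' correction that the lace expansion in \cite{HS05, HS06} produces, but achieve it here by an elementary enumeration of the length-$4$ cycles through a typical explored vertex combined with a first-principles NBW Green-function computation. The second stage and the propagation of the coupling through $T_0$ generations are comparatively routine, relying only on the uniform mixing estimate on the NBW transition matrix that is already established in \cite{FitHof11a}.
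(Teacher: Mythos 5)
Your reduction to a lower bound on $\chi(p_*)$, and your identification of the $5/(2\m^2)$ correction as the collision cost of $4$-cycles computed via non-backtracking-walk transition probabilities, are exactly the heart of the matter: the paper's Lemma \ref{lem-IH} does precisely this bookkeeping (the terms $2p/\m$ and $p^2/2$ coming from edges leading back into $B(k-2)$ and from pairs of shell vertices at mutual distance $2$, plus the backtracking correction $(\m-1)p^2$, all estimated with the BK inequality and Lemma \ref{nbw1}). But the architecture you build around this computation has two genuine problems. First, the exploration cannot be stochastically dominated \emph{from below} by a Galton--Watson process of effective mean $1+\Omega(1/\m^3)$: the collisions you must subtract are of order $1/\m^2$, i.e.\ much larger than the excess $\delta=\Theta(1/\m^3)$ you are trying to preserve, and they depend on the entire past of the exploration. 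Saying the ``cumulative collision rate remains $o(1)$'' is not the right target --- it has to be $o(1/\m^3)$ \emph{after} the exact $5/(2\m^2)$ has been removed, and a pathwise coupling does not survive that subtraction. The paper sidesteps this entirely by working only with expectations: conditioning on $B(k-1)$ and using BK it proves the recursion $\E|\partial B(k)|\geq(1+c/\m^3)\,\E|\partial B(k-1)|$, valid as long as $\E|B(k)|\leq 2^{\m/2}/\m^3$.

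Second, your two-stage survival-plus-sprinkling step is both unnecessary and unavailable. Unnecessary, because once the shell expectations grow geometrically there must be some $k$ with $\chi(p_*)\geq\E|B(k)|\geq 2^{\m/2}/\m^3\gg\lambda 2^{\m/3}$, which is all that the definition \eqref{pcdef} of $p_c$ requires; no survival probability is needed. Unavailable, because growing a cluster of size $1/\delta=\m^3$ to a positive fraction of the giant component is essentially the main theorem of \cite{HN12}: it relies on the triangle condition, which relies on the very bound on $p_c$ you are proving, so the argument would be circular, and the AKS isoperimetric sprinkling needs seed clusters of size $2^{c\m}$, not $\m^3$. (Also, at $\eps=\Theta(\m^{-3})$ the giant component has size $\Theta(\delta 2^{\m})$, not $\Omega(2^{\m})$, and the resulting susceptibility is $\approx\delta^2 2^{\m}$, not $\delta 2^{\m}$ --- still $\gg 2^{\m/3}$, but the exponents are off.) Keep your collision computation; replace the coupling and the second stage by the expectation recursion, and the proof closes.
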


\noindent This theorem is the only novel result of this paper, and
is proved in Section \ref{sec-unlacing}. Let us further briefly discuss the precise value of $p_c$ since it is related to the early literature on hypercube percolation.
\medskip

\paragraph{{\bf The asymptotic expansion of $p_c$}}
The problem of establishing a phase transition for the appearance of a component of size of order $2^{\m}$ was solved in the breakthrough work of Ajtai, Koml\'{o}s and Szemer\'{e}di \cite{AKS82}. They proved that when the retention probability of an edge is scaled as $p=c/\m$ for a fixed constant $c>0$ the model exhibits a phase transition: for $c<1$, the largest component has size of order $\m$ whp, while for $c>1$, the largest component has size linear in $2^\m$ whp. See also \cite{FilPem93, Penrose98} for proofs that the giant component has size $\zeta(c)2^{\m}(1+o(1))$ whp when $p=c/\m$, where $\zeta(c)$ is the survival probability of a Poisson branching process with expected offspring equal to $c$. Thus, $p_c\approx 1/\m$, but it was unclear at that time just how close it is.

The first improvement to \cite{AKS82} was obtained by Bollob\'as, Kohayakawa and \L uczak \cite{BolKohLuc92}. They showed that if $p=(1+\epsm)/\m$ with $\epsm=o(1)$ but $\epsm \geq 60\m^{-1} (\log \m)^3$, then $|\C_1|=(2+o(1))\epsm 2^\m$ \whp. This raises the question whether $p_c=1/(\m-1)$, which is answered negatively in \cite{HS05, HS06}. These results give the most precise estimates on $p_c$ to date:


\begin{theorem}[Asymptotic expansion of $p_c$ \cite{HS05, HS06}]
\label{thm-asy-exp}
For bond percolation on the hypercube $\{0,1\}^{\m}$, there exist rational coefficients $(a_i)_{i\geq 1}$
with $a_1=a_2=1, a_3=7/2$ such that, for every $s\geq 1$, as $\m\rightarrow \infty$,
	\eqn{
	\label{pc-asyp-exp}
	p_c=\sum_{i=1}^s a_i \m^{-i} +O(\m^{-(s+1)}).
	}
\end{theorem}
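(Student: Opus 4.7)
The plan is to follow the lace-expansion approach used in \cite{HS05, HS06}, combined with the fact (now available without lace expansion) that the triangle condition holds at $p_c$. The starting point is the lace-expansion identity for the two-point function $\tau_p(0,x)=\prob_p(0\conn x)$,
\[ \tau_p(0,x) = \delta_{0,x} + p\sum_{y\sim 0} \tau_p(y,x) + \sum_y \Pi_p(0,y)\tau_p(y,x), \]
where $\Pi_p=\sum_{N\geq 1}(-1)^N\Pi_p^{(N)}$ is an alternating series whose $N$-th term is a finite combinatorial sum of ``lace'' diagrams built from $\tau_p$. Summing over $x$ yields $\chi(p)=(1-\m p-\hat\Pi_p(0))^{-1}$ with $\hat\Pi_p(0)=\sum_y\Pi_p(0,y)$. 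Because \eqref{pcdef} reads $\chi(p_c)=\lambda 2^{\m/3}$, we obtain
\[ \m p_c + \hat\Pi_{p_c}(0) = 1 - \lambda^{-1}2^{-\m/3}, \]
and since $2^{-\m/3}$ is stretched-exponentially smaller than any $\m^{-s}$, it is absorbed into every $O(\m^{-(s+1)})$ remainder. The theorem thus reduces to producing an asymptotic expansion of $\hat\Pi_{p_c}(0)$ in $\m^{-1}$ to arbitrary order with rational coefficients.

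I would establish this expansion inductively. Diagrammatic bounds obtained from the random-walk conditions of Section \ref{sec-rwconditions} (which hold on the hypercube by Theorem \ref{thm-unlacing}) give $\hat\Pi_{p_c}^{(N)}(0) = O(\m^{-(N+1)})$, so for a prescribed $s$ only the terms $N\leq s$ contribute modulo $O(\m^{-(s+1)})$, and the tail $\sum_{N>s}$ is negligible. The base case $p_c=\m^{-1}+O(\m^{-2})$ is Theorem \ref{thm-unlacing}. Given $p_c=\sum_{i=1}^s a_i\m^{-i}+O(\m^{-(s+1)})$ with rational $a_1,\dots,a_s$, I would expand each $\hat\Pi_{p_c}^{(N)}(0)$ for $N\le s$ to accuracy $O(\m^{-(s+1)})$ by substituting the inductive expansion of both $p_c$ and $\tau_{p_c}$ (the latter itself expanded via repeated application of the same identity), and then read off the coefficient of $\m^{-(s+1)}$ from the self-consistent equation $\m p_c=1-\hat\Pi_{p_c}(0)+O(2^{-\m/3})$. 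Computing $\hat\Pi^{(1)}_{p_c}$ to leading order (a single triangle of walk-like convolutions) recovers $a_2=1$, and including next-order corrections together with the leading contribution of $\hat\Pi^{(2)}$ recovers $a_3=7/2$.

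Rationality is transparent in this scheme: after Taylor-expanding $\tau_{p_c}$ via the same identity, each $\hat\Pi_{p_c}^{(N)}(0)$ reduces at finite order to a finite linear combination of products of short-time (non-backtracking) random-walk counts and return probabilities on the hypercube, quantities whose denominators involve only $\m$, $\m-1$ and their powers, and which therefore evaluate to rationals at every order. The principal obstacle is the diagrammatic analysis: one must prove diagrammatic bounds $\hat\Pi_{p_c}^{(N)}(0)=O(\m^{-(N+1)})$ that are simultaneously (i) summable in $N$, to justify truncation, and (ii) sharp enough to extract the coefficient $a_{s+1}$ at the $s$-th inductive step. This relies on convergence of the lace expansion at $p=p_c$, which is exactly what the strong triangle condition \eqref{finitetriangle} secures. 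A secondary subtlety is the circular dependence of $\hat\Pi_{p_c}$ on $p_c$: I would resolve it by organizing the induction so that only $a_1,\dots,a_s$ enter the computation of $\hat\Pi_{p_c}^{(N)}(0)$ modulo $O(\m^{-(s+1)})$, letting $a_{s+1}$ appear linearly on both sides of the fixed-point equation and be determined uniquely as a rational number.
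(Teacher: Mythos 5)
This theorem is not proved in the paper at all: it is quoted from \cite{HS05, HS06}, and the paper explicitly lists finding a lace-expansion-free proof of it as Open Problem 5 in Section \ref{sec-open}. Your outline is therefore to be compared with the cited works, and it is indeed a faithful high-level summary of their strategy: the lace-expansion identity for $\tau_p$, the resulting formula for $\chi(p)$, the observation that $\chi(p_c)^{-1}=\lambda^{-1}2^{-\m/3}$ is beyond all orders in $\m^{-1}$, and an inductive/bootstrap extraction of rational coefficients from the fixed-point equation $\m p_c=1-\hat\Pi_{p_c}(0)+O(2^{-\m/3})$, with the convergence of the expansion at $p_c$ secured by the (strong) triangle condition. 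Two small caveats: your stated bound $\hat\Pi^{(N)}_{p_c}(0)=O(\m^{-(N+1)})$ is off by one in the usual indexing (the first diagram, a sum of squares of two-point functions over $x\neq 0$, is already $\Theta(\m^{-1})$), so the truncation level must be adjusted accordingly; and the real labor in \cite{HS06} for $a_3=7/2$ is the explicit evaluation of the leading diagrams, which your sketch correctly identifies but does not carry out. As the paper emphasizes, this route is intrinsically a lace-expansion argument, so it does not contribute to the ``unlacing'' program that is the subject of the present note.
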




Note that by Theorem \ref{thm-sub}, \ref{thm-super} and \ref{thm-asy-exp}, whatever $s\geq 2$ is, the
largest cluster jumps from $O(\m^{2s-1})$ for $p=\sum_{i=1}^s a_i \m^{-i}+\eta \m^{-s}$
with $\eta<0$ to $\Theta(2^{\m}/\m^{s-1})$ for $p=\sum_{i=1}^s a_i \m^{-i}+\eta \m^{-s}$
with $\eta>0$. Thus, the phase transition in $\eta$ is extremely sharp for every $s\geq 2$ fixed.



\subsection{Open problems} \label{sec-open}
We here collect a list of what we consider to be important open problems in this area.
Some of these problems appear in \cite[Section 8]{HN12}, but not all.

\begin{enumerate}

\item[1.] {\bf Percolation on high-dimensional tori.} Consider bond percolation on the nearest-neighbor torus $\Z_n^d$ where $d$ is a large fixed constant and $n\to \infty$ with $p=p_c(1+\epsn)$ such that $\epsn \gg n^{-d/3}$ and $\epsn = o(1)$. Show that $|\C_1|/(\epsn n^d)$ converges to a constant. Does this constant equal the limit as $\vep\downarrow 0$ of $\eps^{-1} \theta_{\Z^d} (p_c(1+\eps))$? Here $\theta_{\Z^d}(p)$ denotes the probability that the cluster of the origin is infinite at $p$-bond percolation on the infinite lattice $\Z^d$.


\item[2.] {\bf Identify the scaling limit of cluster sizes in the scaling window.} Show that $(|\C_j|2^{-2m/3})_{j\geq 1}$ converges in distribution when $p=p_c(1+t 2^{-\m/3})$ and $t \in  {\mathbb R}$ is fixed and identify the limit distribution. This should be the limiting distribution of critical clusters in $G(n,p)$ as identified by Aldous \cite{Aldo97}.

    We remark that in \cite{HH2} it is proved that any subsequential limit of $\{|\C_1|2^{-2\m/3}\}_{\m\geq 1}$ is
a proper random variable, that is, when $\prob(X=\E[X])<1$. This {\em non-concentration} is the hallmark of critical behavior.

\item[3.] {\bf Prove that the discrete duality principle holds for hypercube percolation.}
Show that $|\C_1| = (2+o(1)) \eps^{-2} \log(\eps^3 2^m)$ when $\epsm \ll -2^{-\m/3}$ and $\epsm = o(1)$ and that $|\C_2| = (2+o(1)) \eps^{-2} \log(\epsm^3 2^{\m})$ when $p=p_c(1+\epsm)$ with $\epsm\gg 2^{-\m/3}$.
This is also the content of \cite[Conjectures 3.1 and 3.3]{BCHSS3} and is proved for some values of $\epsm$ in \cite{BolKohLuc92}. In $G(n,p)$ these results are proved in
 \cite{PitWor05} and \cite[Theorem 5.6]{JLR00}.

\item[4.] {\bf Prove a central limit theorem for $|\C_1|$.}
Show that $|\C_1|$ satisfies a central limit theorem throughout the supercritical
regime. In $G(n,p)$ this and much more was established by Pittel and Wormald \cite{PitWor05}.


\item[5.] {\bf Unlace the asymptotic expansion of $p_c$.} Find a proof of Theorem \ref{thm-asy-exp} that does not rely on the lace expansion. Possibly, the ideas in the proof of Theorem \ref{thm-unlacing} in Section \ref{sec-unlacing}
can be used.

\item[6.] {\bf Compute further coefficients of the asymptotic expansion of $p_c$.} Find the numerical values of $a_i$ for $i\geq 4$ in Theorem \ref{thm-asy-exp}. There is a large physics literature on asymptotic
expansions of critical values. See e.g., \cite{HS05} for some of the references.
We expect that $a_4=16$, as the first 4 coefficients of
the asymptotic expansion of $p_c$ can be expected to be the same as the ones for the asymptotic expansion of $p_c({\mathbb Z}^d)$ in terms of inverse powers of $2d$ (see e.g.\ \cite{GauRus78}).
We also expect that $a_5$ is \emph{not}
equal to the 5th coefficient in the asymptotic expansion of $p_c({\mathbb Z}^d)$ in terms of
$1/(2d)$, which is predicted to be equal to 103 \cite{GauRus78}. Recently,
substantial progress was made for the asymptotic
expansion of the connective constant for self-avoiding walk on ${\mathbb Z}^d$, for which
the first 13 coefficients have been computed by Clisby, Liang and Slade
(see \cite{CliLiaSla07, CliSla09}).
\end{enumerate}



\subsection{Acknowledgements}
The work of RvdH was supported in part by the Netherlands
Organization for Scientific Research (NWO). The work of AN was partially supported by NSF and NSERC grants.
This work was presented by RvdH on the occasion of the Stochastik Tage 2012, held in Mainz
March 6-9, 2012.

\section{Overview of the proof of the supercritical phase}
\label{sec-overview}

In this section we give an overview of the key steps in the proofs in \cite{HN12}.
From here on, we assume that $\epsm$ is a sequence
such that $\epsm=o(1)$ but $\epsm^3 V\to \infty$.

\subsection{Notations and tools}
\label{sec-notation}
We write $d_{G_p}(x,y)$ for the length of a shortest
$p$-open path between $x,y$ and put $d_{G_p}(x,y)=\infty$ if $x$ is not connected
to $y$ in $G_p$. We write $x \lrr y$ if $d_{G_p}(x,y) \leq r$ and
$x \stackrel{=r}{\lrfill} y$ if $d_{G_p}(x,y)=r$ and $x \stackrel{[a,b]}{\lrfill} y$
if $d_{G_p}(x,y) \in [a,b]$.
The {\em intrinsic metric} ball of radius $r$ around $x$ and its boundary are defined by
    \eqn{
    B_x(r) = \{ y \colon d_{G_p}(x,y) \leq r\} \, , \qquad
    \partial B_x(r) = \{ y \colon d_{G_p}(x,y)=r\} \, .
    }
Note that these are random sets of the graph and not the balls in shortest path metric
of the graph $G$.
We often drop $0$ from notation and write $B(r)$ for $B_0(r)$ whenever possible.

\subsection{Tails of the supercritical cluster size}
\label{sec-super-clus-tail-over}
We start by describing the tail of the cluster size in the supercritical regime.

\begin{theorem}[Bounds on the cluster tail]
\label{clustertail}
Let $G$ be a finite transitive graph of degree $\m$ on $V$ vertices such that the finite
triangle condition (\ref{finitetriangle}) holds and put $p=p_c(1+\epsm)$
where $\epsm = o(1)$ and $\epsm \gg V^{-1/3}$.
Then, for the sequence $k_0=\epsm^{-2} (\epsm^3 V)^{1/4}$,
    \be
    \label{lb-cluster-tail}
    \prob ( |\C(0)| \geq k_0 ) = 2 \epsm (1+o(1)) \, .
    \ee
\end{theorem}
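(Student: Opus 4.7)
The plan is to establish matching upper and lower bounds on $\prob(|\C(0)|\geq k_0)$, each of which should reduce to the survival probability $2\epsm(1+o(1))$ of a slightly-supercritical branching process with offspring mean $1+\epsm(1+o(1))$. The choice of threshold $k_0=\epsm^{-2}(\epsm^3 V)^{1/4}$ satisfies $k_0\gg \epsm^{-2}$, which puts us past the ``critical window scale'' $\epsm^{-2}$ of such a branching process; beyond this scale the tail $\prob(\text{progeny}\geq k)$ has flattened out to the survival probability $\theta\approx 2\epsm$ and is essentially independent of $k$.

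For the upper bound I would couple the breadth-first exploration of $\C(0)$ to a Galton--Watson process that stochastically dominates it, with offspring distribution close to $\text{Bin}(\m-1,p)$. A key estimate, obtained by integrating the BK bound $\frac{d}{dp}\chi(p)\leq \m \chi(p)^2$ together with its reverse (derived using the strong triangle condition \eqref{finitetriangle}, as in the derivation of \refeq{chi(p)-asy}) is that $\m p_c = 1 + O(\max_{v\neq 0}\nabla_{p_c}(0,v))$, which under \eqref{finitetriangle} with $\lambda$ small is $1+o(1)$. The dominating branching process then has mean $1+\epsm(1+o(1))$ and bounded variance. A classical hitting-time estimate for the associated mean-$\epsm$ random walk with unit-order variance bounds the probability that the total progeny exceeds $k_0\gg \epsm^{-2}$ by $2\epsm(1+o(1))$, and stochastic domination transfers this to $\prob(|\C(0)|\geq k_0)\leq 2\epsm(1+o(1))$.

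For the lower bound I would perform a second-moment computation on the intrinsic-metric sphere $|\partial B(r_0)|$ with $r_0$ chosen so that $(1+\epsm)^{r_0}\asymp \epsm^{-1}$, i.e., $r_0\asymp \epsm^{-1}\log(1/\epsm)$. Summing the two-point function along $r_0$-step paths and using BK to control non-backtracking corrections gives $\E|\partial B(r_0)|=(1+o(1))(1+\epsm)^{r_0}$. The second moment $\E|\partial B(r_0)|^2$ decomposes via a standard backbone argument into a sum over branch points, each of which contributes a triangle factor controlled by \eqref{finitetriangle}; the upshot is $\E|\partial B(r_0)|^2\leq (\E|\partial B(r_0)|)^2 \cdot \epsm^{-1}(1+o(1))$. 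The Paley--Zygmund inequality then yields $\prob(|\partial B(r_0)|>0)\geq \epsm(1-o(1))$, and upgrading this to the sharp constant requires an additional size-biasing or fourth-moment step. Finally, on the event $\{\partial B(r_0)\neq \emptyset\}$, a Chebyshev estimate on $|B(2r_0)|=\sum_{r\leq 2r_0}|\partial B(r)|$ (whose conditional mean is already of order $\epsm^{-2}$) shows that $|\C(0)|\geq k_0$ occurs with conditional probability $1-o(1)$, since one only needs to iterate the exploration $O(\log(\epsm^3 V))$ more generations to reach size $k_0$ and the drift $1+\epsm$ is overwhelming at that point.

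The chief obstacle is the sharp constant $2$: the Paley--Zygmund bound naively produces only $\epsm(1-o(1))$, and the missing factor of two arises (as it does in the classical branching-process asymptotic $2(\mu-1)/\sigma^2$) from matching the effective offspring variance to unity. This matching succeeds only under the strong triangle condition \eqref{finitetriangle} and requires careful bookkeeping of the $o(1)$ errors in both the first- and second-moment calculations, together with a fourth-moment estimate to rule out that $|\partial B(r_0)|$ is mostly concentrated on ``exceptional'' configurations of low probability but extreme size. A secondary obstacle is ensuring that survival at intrinsic radius $r_0$ propagates into size $k_0$ rather than dying out; this is handled by using that on survival the cluster has $\Theta(\epsm^{-1})$ active vertices at distance $r_0$, each launching an essentially independent near-critical exploration, independence being controlled (once again) by \eqref{finitetriangle}.
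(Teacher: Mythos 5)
The paper does not actually prove Theorem \ref{clustertail}: it is quoted from \cite[Appendix A]{HN12}, and the text immediately below the statement records that the method there is a sharpening of the Barsky--Aizenman \emph{differential inequalities} for the magnetization $\E_p[1-(1-\gamma)^{|\C(0)|}]$, which measure everything relative to $p_c$ as defined by \eqref{pcdefgeneral}. Your proposal takes a different route (branching-process domination for the upper bound, a second-moment method on $|\partial B(r_0)|$ for the lower bound), and it has a genuine gap that is not a matter of bookkeeping. For the upper bound, stochastic domination by a Galton--Watson process with offspring law close to $\mathrm{Bin}(\m-1,p)$ yields a survival probability of order $2[(\m-1)p-1]$, and $(\m-1)p-1=[(\m-1)p_c-1]+(\m-1)p_c\epsm$. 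Your key estimate controls only $\m p_c=1+O(\max_{v\neq 0}\nabla_{p_c}(0,v))$, i.e.\ $(\m-1)p_c=1+O(a_0)$; what the argument actually needs is $(\m-1)p_c=1+o(\epsm)$, and this is \emph{false}. On the hypercube, Theorem \ref{thm-asy-exp} gives $(\m-1)p_c-1=5/(2\m^2)+O(\m^{-3})$, while $\epsm$ is only required to exceed $2^{-\m/3}$ and may thus be far smaller than $\m^{-2}$. In that regime the dominating branching process is supercritical with survival probability of order $\m^{-2}\gg\epsm$, so the comparison cannot produce the bound $2\epsm(1+o(1))$. This is precisely why the actual proof works with quantities anchored at $p_c$ (via $\chi(p_c)=\lambda V^{1/3}$) rather than comparing $p$ to $1/(\m-1)$.

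The lower bound has the same defect in a different guise: the asserted growth $\E|\partial B(r_0)|=(1+o(1))(1+\epsm)^{r_0}$ presumes that the per-generation growth factor is $1+\epsm+o(\epsm)$, which again would require $(\m-1)p_c=1+o(\epsm)$ up to triangle corrections; the true growth rate involves $(\m-1)p$ minus local correction terms (compare the delicate bookkeeping in Lemma \ref{lem-IH}, where errors of order $\m^{-2}$ and $\m^{-3}$ in the growth factor are exactly the issue). Finally, you correctly identify that Paley--Zygmund yields only $\prob(|\partial B(r_0)|>0)\geq\epsm(1-o(1))$ and that recovering the constant $2$ needs more, but the proposed ``size-biasing or fourth-moment step'' is left unspecified --- and that constant is the entire content of \eqref{lb-cluster-tail}, since bounds of order $\epsm$ in both directions were already known from \cite{BCHSS2}. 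As it stands, the proposal proves neither inequality in \eqref{lb-cluster-tail} with the constant $2$.
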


This Theorem is reminiscent of the fact that a branching process
with Poisson progeny  distribution of mean $1+\eps$ has survival
probability of $2\eps(1+O(\vep))$. Upper and lower
bounds of order $\eps$ for the cluster tail were proved already in
\cite{BCHSS2} using Barsky and Aizenman's differential inequalities \cite{BA91},
and were sharpened in \cite[Appendix A]{HN12} to obtain the right constant 2.

Let $Z_{\sss \geq k}$ denote the number of vertices with cluster size at least $k$, i.e.,
    \be
    Z_{\sss \geq k} = \big | \big \{ v \colon |\C(v)| \geq k\big \} \big | \, .
    \ee
We use Theorem \ref{clustertail} to show that $Z_{\sss \geq k_0}$, with $k_0$ as
in the theorem, is concentrated.

\begin{lemma}[Concentration of $Z_{\sss \geq k_0}$]
\label{lem-conc-Z-I}
In setting of Theorem \ref{clustertail}, if $m\to \infty$, then
    \eqn{
    \frac{Z_{\sss \geq k_0}}{2\eps V}\convp 1,
    \qquad \text{and}
    \qquad
    \E |\C(0)| \leq (4+o(1))\eps^2 V \, .
    }
\end{lemma}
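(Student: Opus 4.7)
The plan is to combine a first-moment computation for $Z_{\sss \geq k_0}$ with a BK-based second-moment estimate, and then extract the estimate on $\expec|\C(0)|$ via the combinatorial identity $V\expec|\C(0)| = \expec\sum_C |C|^2$, where the sum runs over the clusters of $G_p$.

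First, by transitivity and Theorem~\ref{clustertail}, $\expec[Z_{\sss \geq k_0}] = V \prob(|\C(0)| \geq k_0) = (2+o(1))\eps V$. For the second moment I would expand
$$\expec[Z_{\sss \geq k_0}^2] = \sum_{u,v} \prob(|\C(u)| \geq k_0, |\C(v)| \geq k_0)$$
and split according to whether $u \lr v$. In the disconnected case $\C(u)$ and $\C(v)$ provide edge-disjoint witnesses, so the two events $\{|\C(u)| \geq k_0\}$ and $\{|\C(v)| \geq k_0\}$ occur disjointly and the BK inequality yields
$$\prob(u \nleftrightarrow v, |\C(u)| \geq k_0, |\C(v)| \geq k_0) \leq \prob(|\C(0)| \geq k_0)^2 = (2\eps)^2(1+o(1)),$$
which sums to $(1+o(1))(\expec Z_{\sss \geq k_0})^2$. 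The connected case contributes $V\expec[|\C(0)|\indic{|\C(0)| \geq k_0}] = \expec\sum_{C: |C| \geq k_0}|C|^2 \leq \expec[|\C_1| \cdot Z_{\sss \geq k_0}]$, using $\sum x_i^2 \leq (\sum x_i)(\max x_i)$.

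The main obstacle is controlling $|\C_1|$, since sharp control on $|\C_1|$ is essentially the content of Theorem~\ref{thm-super} itself. I would break the circularity in two steps. A crude a priori bound $|\C_1|\indic{|\C_1| \geq k_0} \leq Z_{\sss \geq k_0}$ combined with the first-moment computation (via Markov) already gives $|\C_1| = O(\eps V)$ whp, hence $\expec[Z_{\sss \geq k_0}^2] = O((\expec Z_{\sss \geq k_0})^2)$. To sharpen the constant to $(1+o(1))$, I would run a sprinkling argument: expose the configuration at a slightly smaller $p_1$, use Theorem~\ref{clustertail} at $p_1$ to produce many clusters of size $\geq k_0$, and then argue that adding the remaining edges merges them into a single dominant cluster whp. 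This forces $\sum_{C:|C|\geq k_0}|C|^2 = (1+o(1)) Z_{\sss \geq k_0}^2$, giving $\expec Z_{\sss \geq k_0}^2 = (1+o(1))(\expec Z_{\sss \geq k_0})^2$, and Chebyshev then yields $Z_{\sss \geq k_0}/(2\eps V) \convp 1$.

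Finally, for the expectation bound, split
$$V\expec|\C(0)| = V\expec[|\C(0)|\indic{|\C(0)| < k_0}] + \expec \sum_{C:|C| \geq k_0} |C|^2.$$
The first term is handled by a truncated-susceptibility estimate: small clusters at $p = p_c(1+\eps)$ behave subcritically, and a Barsky--Aizenman type differential inequality (analogous to the argument leading to \refeq{chi(p)-asy}, but applied to the truncated two-point function) gives $\expec[|\C(0)|\indic{|\C(0)| < k_0}] = O(\eps^{-1}) = o(\eps^2 V)$ by the standing assumption $\eps^3 V \to \infty$. The second term, by the previous step and the concentrations of $|\C_1|$ and $Z_{\sss \geq k_0}$ around $2\eps V$, equals $(4+o(1))\eps^2 V^2$. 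Dividing by $V$ yields $\expec|\C(0)| \leq (4+o(1))\eps^2 V$, completing the plan.
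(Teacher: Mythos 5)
Your first moment step, the BK bound for disconnected pairs, and the identity $V\,\E|\C(0)|=\E\sum_{C}|C|^2$ are all correct and are indeed the standard opening moves. The proof breaks down at the second moment. In the supercritical phase the ``connected'' contribution $\sum_{u,v}\prob(u\lr v,\,|\C(u)|\geq k_0)=V\,\E\big[|\C(0)|\1_{\{|\C(0)|\geq k_0\}}\big]$ is \emph{not} an error term: it is essentially $\E|\C_1|^2\approx 4\eps^2V^2=(\E Z_{\sss\geq k_0})^2$. So even granting perfect control of $|\C_1|$, your decomposition only yields $\E[Z_{\sss\geq k_0}^2]\leq (2+o(1))(\E Z_{\sss\geq k_0})^2$, i.e.\ $\Var(Z_{\sss\geq k_0})\lesssim(\E Z_{\sss\geq k_0})^2$, and Chebyshev gives nothing. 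The real issue is that the BK bound on the disconnected part is off by an unbounded factor here: above the window almost every pair with both clusters large is in fact connected, so the true disconnected contribution is $o(\eps^2V^2)$ while BK only certifies $\leq 4\eps^2V^2$; the two halves of your bound double-count. Neither proposed rescue repairs this. First, ``sprinkling forces $\sum_{C:|C|\geq k_0}|C|^2=(1+o(1))Z_{\sss\geq k_0}^2$, hence $\E Z_{\sss\geq k_0}^2=(1+o(1))(\E Z_{\sss\geq k_0})^2$'' is a non sequitur: substituting $\E\sum_C|C|^2=(1+o(1))\E[Z_{\sss\geq k_0}^2]$ into $\E[Z_{\sss\geq k_0}^2]=D+\E\sum_C|C|^2$ only tells you $D=o(\E Z_{\sss\geq k_0}^2)$ and gives no comparison between $\E[Z_{\sss\geq k_0}^2]$ and $(\E Z_{\sss\geq k_0})^2$ --- a single dominant cluster is perfectly consistent with a non-concentrated $Z_{\sss\geq k_0}$. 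Second, invoking sprinkling is circular in this architecture: the improved sprinkling proof of Theorem \ref{mainthmgeneral}(b) \emph{uses} Lemma \ref{lem-conc-Z-I} to establish \eqref{auxgraph}, and merging clusters when $\eps=o(1)$ is exactly the hard content of Theorem \ref{alphapairs}, not a black box available inside this lemma.

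The survey does not prove the lemma; the proof in \cite{HN12} avoids the obstruction by \emph{localizing}: one replaces the global event $\{|\C(v)|\geq k_0\}$ by the intrinsic-ball survival event $\{\partial B_v(r)\neq\emptyset\}$ with $r$ just above the correlation length $\eps^{-1}$. This event also has probability $(2+o(1))\eps$ and differs from $\{|\C(v)|\geq k_0\}$ with probability $o(\eps)$, but now the ``intersecting'' term in the variance of $\sum_v\1_{\{\partial B_v(r)\neq\emptyset\}}$ is controlled by $V\,\E|B(2r)|=o(\eps^2V^2)$ rather than by $V\,\E|\C(0)|=\Theta(\eps^2V^2)$ --- this is precisely why Sections \ref{sec-notation} and \ref{sec-large-boundary} insist on intrinsic balls with $r\gg\eps^{-1}$ ``but just barely''. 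Two smaller points: the bound $\E|\C(0)|\leq(4+o(1))\eps^2V$ requires $\E[|\C_1|Z_{\sss\geq k_0}]\leq(4+o(1))\eps^2V^2$, an expectation bound that does not follow from convergence in probability of $|\C_1|$ and $Z_{\sss\geq k_0}$ without a tail estimate; and the truncated susceptibility $\E[|\C(0)|\1_{\{|\C(0)|<k_0\}}]$ is of order $\sqrt{k_0}+\eps k_0$ under the triangle condition, not $O(\eps^{-1})$, although this is still $o(\eps^2V)$ so that part of your conclusion survives.
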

Lemma \ref{lem-conc-Z-I} immediately proves the upper bound on
$|\C_1|$ in Theorem \ref{thm-super}:
\medskip
\paragraph{{\bf Proof of upper bound on $|\C_1|$ in Theorem \ref{thm-super}.}}
Note that $\{|\C_1|\geq k\}=\{Z_{\sss \geq k}\geq 1\}$, so that
$|\C_1|\leq Z_{\sss \geq k}$ on the event $\{Z_{\sss \geq k}\geq 1\}$.
Applying this to $k=k_0$ and using Lemma \ref{lem-conc-Z-I} proves the upper bound in
Theorem \ref{thm-super}.
\qed

\subsection{Uniform connection bounds and the role of the random walk}
\label{sec-unif-conn-NBW}
We expand here on one of our most useful estimates on percolation connection probabilities. In its proof, a simple key connection between percolation and the mixing time of the non-backtracking walk is revealed. In the analysis of the Erd\H{o}s-R\'enyi random graph $G(n,p)$ symmetry plays a special role.
One instance of this symmetry is that the function
$f(x) = \prob(0 \lr x)$ is constant whenever $x \neq 0$ and its value is
precisely $(V-1)^{-1}(\E|\C(0)|-1)$ and $1$ when $x=0$. Such a statement
clearly does not hold on the hypercube at $p_c$: the probability that
two neighbors are connected is at least $p_c\geq \m^{-1}$, while the probability that
$0$ is connected to one of the vertices in the barycenter of the cube is
at most $\sqrt{\m} 2^{-\m} \E|\C(0)|$ by symmetry.

A key observation in the proof of Theorem \ref{thm-super} in \cite{HN12}
is that one can recover this symmetry as long as we
require the connecting paths to be longer than the mixing time of the random walk,
as shown in \cite[Lemma 3.12]{HN12}:

\begin{lemma}[Uniform connection estimates]
\label{moreunifconnbd}
Perform bond percolation on any graph $G$ satisfying
the assumptions of Theorem \ref{mainthmgeneral}. Then, for every $r\geq \mnot$ and any vertex $x\in G$
    \be
    \label{exampleunif}
    \prob_{p_c} (0 \stackrel{[\mnot, r]}{\lrfill} x) \leq (1+ o(1)){\E |B(r)|\over V}\, ,
    \ee
where $\mnot$ is uniform mixing time as defined above Theorem \ref{mainthmgeneral}.
In particular,
    \be
    \label{exampleunifb}
    \prob_{p_c} (0 \stackrel{[\mnot, \infty)}{\lrfill} x) \leq (1+ o(1)){\E |\C(0)|\over V}\, .
    \ee
\end{lemma}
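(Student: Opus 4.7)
\emph{Proof plan.} My approach would be a first-moment bound via non-backtracking walks (NBWs), combined with the uniform mixing of the NBW once the path length exceeds $\mnot$.

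First I would observe that any $p_c$-open geodesic from $0$ to $x$ of length $t$ is in particular a self-avoiding walk, hence non-backtracking. Writing $N_t(0,x) = \m(\m-1)^{t-1}\p^t(0,x)$ for the number of length-$t$ NBWs from $0$ to $x$, a union bound gives
\[
\prob_{p_c}\!\bigl(d_{G_p}(0,x) = t\bigr) \;\leq\; p_c^t N_t(0,x) \;=\; \frac{\m}{\m-1}\,q^t\,\p^t(0,x),
\]
where $q = p_c(\m-1)$. Summing over $t\in[\mnot,r]$ yields
\[
\prob_{p_c}\bigl(0 \stackrel{[\mnot,r]}{\lrfill} x\bigr) \;\leq\; \sum_{t=\mnot}^r p_c^t N_t(0,x).
\]

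Second I would exploit the uniform mixing hypothesis. Condition (2) of Theorem \ref{mainthmgeneral} gives $q^{\mnot} = 1 + o(1)$, so in particular $q = 1+o(1)$. Pair consecutive indices $t$ and $t+1$: for $t \geq \mnot$,
\[
p_c^t N_t(0,x) + p_c^{t+1} N_{t+1}(0,x) \;=\; \tfrac{\m}{\m-1}\,q^t\bigl[\p^t(0,x) + q\,\p^{t+1}(0,x)\bigr] \;\leq\; \tfrac{\m}{\m-1}\,q^t\,\tfrac{2(1+o(1))}{V},
\]
by the definition of $\mnot$ and $q \to 1$. Comparing to $p_c^t N_t + p_c^{t+1} N_{t+1} = \frac{\m}{\m-1}q^t(1+q) = \frac{2\m q^t(1+o(1))}{\m-1}$, the $x$-dependent pair is at most $(1+o(1))V^{-1}$ times the corresponding $x$-free pair. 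Summing pairs over $t \in [\mnot,r]$ gives
\[
\prob_{p_c}\bigl(0 \stackrel{[\mnot,r]}{\lrfill} x\bigr) \;\leq\; \frac{1+o(1)}{V}\sum_{t=\mnot}^r p_c^t N_t.
\]

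The main obstacle is passing from $\sum_{t=\mnot}^r p_c^t N_t$ to $\E|B(r)|$ on the right-hand side, i.e., showing $\sum_{t=\mnot}^r p_c^t N_t \leq (1+o(1))\,\E|B(r)|$. The easy direction $\E|B(r)| \leq \sum_{t=0}^r p_c^t N_t$ is the trivial first-moment upper bound; what is needed is its asymptotic tightness. This reduces to verifying that NBWs and SAWs are asymptotically equinumerous, and that distinct $p_c$-open SAWs of length $t$ typically terminate at distinct vertices lying at intrinsic distance exactly $t$---a second-moment statement that follows from the finite triangle condition, itself a consequence of condition (3) and established as part (a) of Theorem \ref{mainthmgeneral}. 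The bound (\ref{exampleunifb}) then follows by sending $r \to \infty$ and using $\E|B(r)| \uparrow \E|\C(0)|$ by monotone convergence.
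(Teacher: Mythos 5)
Your opening moves --- bounding $\prob_{p_c}(d_{G_p}(0,x)=t)$ by the expected number of open non-backtracking paths, $p_c^t\,\m(\m-1)^{t-1}\p^t(0,x)$, and then pairing consecutive values of $t$ so that the definition of $\mnot$ applies --- are sound, and are exactly the devices used in the proof of part (a) of Theorem \ref{mainthmgeneral}. The fatal problem is the step you yourself flag as ``the main obstacle'': the comparison $\sum_{t=\mnot}^{r}p_c^tN_t\leq(1+o(1))\,\E|B(r)|$ is not a second-moment technicality that the triangle condition will rescue --- it is false. On the hypercube $q=p_c(\m-1)>1$ (Theorem \ref{thm-asy-exp} gives $p_c-1/(\m-1)=5/(2\m^3)+O(\m^{-4})>0$), so $\sum_{t=\mnot}^{r}p_c^tN_t\geq r-\mnot+1$, while $\E|B(r)|\leq\E_{p_c}|\C(0)|=\lambda V^{1/3}$; the inequality thus fails by an unbounded factor as soon as $r\gg V^{1/3}$, and for \eqref{exampleunifb} (morally $r=\infty$) the left-hand side diverges outright. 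The underlying reason is that the first-moment bound $\E|\partial B(t)|\leq\frac{\m}{\m-1}q^t$ is genuinely not asymptotically tight: the losses from backtracking into $B(t-2)$ and from collisions, of relative size $\Theta(1/\m)$ per generation (these are precisely the $(\m-1)p^2$ and $p^2/2$ terms in Lemma \ref{lem-IH}), compound over $t$ generations, and beyond the critical scale $\E|\partial B(t)|$ must decay so that $\sum_t\E|\partial B(t)|=\lambda V^{1/3}$ stays finite while $\sum_t q^t$ does not. No refinement of the SAW/NBW comparison closes this.

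The actual proof (in \cite[Lemma 3.12]{HN12}; this survey only cites it) confines the union bound over walks to the \emph{last} $\mnot$ steps of the connection. If $d_{G_p}(0,x)=j\in[\mnot,r]$, cut an open geodesic at the vertex $u$ at intrinsic distance $j-\mnot$ from $0$; then $\{0\stackrel{=j-\mnot}{\lrfill}u\}$ and the event $P_{\mnot}(u,x)$ that there is an open path of length $\mnot$ from $u$ to $x$ occur disjointly, so the BK inequality gives
\[
\prob_{p_c}\bigl(0\stackrel{[\mnot,r]}{\lrfill}x\bigr)\;\leq\;\Bigl(\sum_u\prob_{p_c}\bigl(d_{G_p}(0,u)\leq r-\mnot\bigr)\Bigr)\cdot\max_u\prob_{p_c}\bigl(P_{\mnot}(u,x)\bigr),
\]
and only now does one invoke the walk bound $\prob_{p_c}(P_{\mnot}(u,x))\leq\frac{\m}{\m-1}q^{\mnot}\p^{\mnot}(u,x)=O(1/V)$ uniformly in $u$ and $x$, while the sum over $u$ of the percolation factors is exactly $\E|B(r-\mnot)|\leq\E|B(r)|$. (To obtain the constant $1+o(1)$ rather than $2+o(1)$ one must cut at distance $\mnot$ or $\mnot+1$ from $x$ according to parity, since the mixing condition only controls the average of $\p^t$ and $\p^{t+1}$; your pairing trick is the right instinct for this, just applied to the wrong segment of the path.) The moral is that the long part of the connection must be charged to genuine percolation probabilities, which sum to $\E|B(r)|$ by definition, and only a mixing-time-length tail may be charged to non-backtracking walk counts.
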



The proof of the above lemma is short and elementary, see \cite{HN12}. There it is also shown how to obtain similar estimates for $p=p_c(1+\eps)$ (with an error depending on $\eps$). The uniformity of this lemma allows us to decouple the sum in the triangle diagram and yields a simple proof of the strong triangle condition, as we now show. \\

\noindent {\bf Proof of part (a) of Theorem \ref{mainthmgeneral}}. Let $p \leq p_c$. If one of the connections in the sum $\nabla_{p}(x,y)$ is of length in $[\mnot, \infty)$, say between
$x$ and $u$, then we may estimate
    \eqan{
    \sum _{u,v} \prob_{p}(x\stackrel{[\mnot, \infty)}{\lrfill} u) \prob_{p}(u \lr v) \prob_{p}(v \lr y)
     &\leq {(1+o(1)) \E_{p} |\C(0)| \over V} \sum_{u,v} \prob_{p}(u \lr v) \prob_{p}(v \lr y)
     \\ &= {(1+o(1)) (\E_{p} |\C(0)|)^3 \over V} \, ,\nn
    }
where we have used Lemma \ref{moreunifconnbd} for the first inequality. Thus, we are only left to deal with short connections:
    \eqn{
    \nabla_{p}(x,y) \leq \sum_{u,v} \prob_{p}(x \stackrel{\mnot}{\lrfill} u)
    \prob_{p}(u \stackrel{\mnot}{\lrfill} v)\prob_{p}(v \stackrel{\mnot}{\lrfill} y) + O(\chi(p)^3/V) \, .
    }
We write
    \eqn{
    \prob_{p}(x \stackrel{\mnot}{\lrfill} u) = \sum_{t_1=0}^{\mnot} \prob_{p}(x \stackrel{=t_1}{\lrfill} u) \, ,
    }
and do the same for all three terms so that
    	\be\label{triangle.midstep} \nabla_{p}(x,y) \leq
	\sum_{u,v} \sum_{t_1,t_2,t_3}^{\mnot} \prob_{p}(x \stackrel{=t_1}{\lrfill} u)
	\prob_{p}(u \stackrel{=t_2}{\lrfill} v) \prob_{p}(v \stackrel{=t_3}{\lrfill} y) + O(\chi(p)^3/V) \, .
    	\ee
We bound
    	\eqn{
	\prob_{p}(x \stackrel{=t_1}{\lrfill} u) \leq \m(\m-1)^{t_1-1} \p^{t_1}(x,u) p^{t_1} \, ,
    	}
simply because $\m(\m-1)^{t_1-1} \p^{t_1}(x,u)$ is an upper bound on the number of
simple paths of length $t_1$ starting at $x$ and ending at $u$. Hence
    	\eqn{
	\nabla_{p}(x,y) \leq {\m^3 \over (\m-1)^3} \sum_{u,v}
	\sum_{\substack{t_1,t_2,t_3}}^{\mnot} [p(m-1)]^{t_1+t_2+t_3} \p^{t_1}(x,u) \p^{t_2}(u,v) \p^{t_3}(v,y)
	+ O(\chi(p)^3/V) \, .
	}
Since $p \leq p_c$, assumption (2) gives that $[p(m-1)]^{t_1+t_2+t_3} = 1 + o(1)$, and
it is a simple consequence of condition (3) that
	\eqn{
	\sum_{u,v}
	\sum_{\substack{t_1,t_2,t_3}}^{\mnot} [p(m-1)]^{t_1+t_2+t_3} \p^{t_1}(x,u) \p^{t_2}(u,v) \p^{t_3}(v,y)
	\leq {\bf 1}_{\{x=y\}} + o(1) \, ,
	}
where $o(1)$ vanishes as $\m\rightarrow \infty$, concluding the proof. \qed

\subsection{Most large cluster share large boundary}
\label{sec-large-boundary}

Since this is the most technical part of the overview, at the expense of being precise, we
have chosen to reduce the clutter of notation and suppress several parameters from the
notation. We ignore several dependencies between parameters and the skeptical reader is
welcomed to read the more precise overview presented in \cite{HN12}.

Two parameters however play an important role. We choose $r$ and $r_0$ so that $r \gg \epsm^{-1}$ but just barely, and $r_0 \gg r$ in a way that will become clear later. For vertices $x,y$, define the random variable
    	\eqn{
    	S_{r+r_0}(x,y) =
	\big | \big \{ (u,u')\in E(G) \colon \{x \stackrel{r+r_0}{\lrfill} u\} \circ \{y \stackrel{r+r_0}{\lrfill} u'\} \, ,
	|B_u(r+r_0)|\cdot |B_{u'}(r+r_0)| \leq \eps^{-2} (\E |B(r_0)|)^2 \big \} \big | \, .
    	}
The edges counted in $S_{r+r_0}(x,y)$ are the ones that we are going to sprinkle.
Informally, a pair of vertices $(x,y)$ is \emph{good} when their clusters
are large and $S_{r+r_0}(x,y)$ is large, so that their clusters have many edges
between them. We make this quantitative in the following definition:

\bde[$(r,r_0)$-good pairs]\label{goodpairs}
We say that $x,y$ are $(r,r_0)$-good if all of the following occur:
\begin{enumerate}
\item $\partial B_x(r) \neq \emptyset$, $\partial B_y(r)\neq \emptyset$
 and $B_x(r) \cap B_y(r) = \emptyset$,
\item $|\C(x)|\geq (\epsm^3 V)^{1/4} \epsm^{-2}$ and $|\C(y)| \geq (\epsm^3 V)^{1/4} \epsm^{-2}$,
\item $S_{2r+r_0}(x,y) \geq V^{-1} \m \epsm^{-2} (\E|B(r_0)|)^2$.
\end{enumerate}
Write $\Park$ for the number of $(r,r_0)$-good pairs.
\ede
%

\begin{theorem} [Most large clusters share many boundary edges]
\label{alphapairs}
Let $G$ be a graph on $V$ vertices and degree $\m$ satisfying the assumptions in Theorem \ref{mainthmgeneral}. Assume that $\epsm$ satisfies $\epsm \gg V^{-1/3}$ and $\epsm = o(\mnot^{-1})$. Then,
    $$
    \frac{\Park}{(2\epsm V)^2}\convp 1\, .
    $$
\end{theorem}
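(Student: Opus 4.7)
My approach will be a standard first- and second-moment argument on $\Park$: it suffices to establish (i) $\E[\Park] = (1+o(1))(2\epsm V)^2$ and (ii) $\Var(\Park) = o((2\epsm V)^4)$; the claim then follows from Chebyshev's inequality. Writing $\Park = \sum_{x,y} \indic{(x,y)\text{ is good}}$ and exploiting transitivity of $G$, both moments reduce to computing probabilities of intersections of two or four goodness events, for which the uniform connection estimate (Lemma~\ref{moreunifconnbd}) and the triangle condition (part (a) of Theorem~\ref{mainthmgeneral}) are the workhorses.

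For the first moment, by Theorem~\ref{clustertail} each vertex lies in a cluster of size $\geq k_0$ with probability $2\epsm(1+o(1))$, and for typical pairs $x,y$ the events $\{|\C(x)|\geq k_0\}, \{|\C(y)|\geq k_0\}$ are asymptotically independent: the probability that $x,y$ lie in the same cluster is at most $\prob_{p_c}(x\lr y) = O(\E|\C(0)|/V) = o(1)$ by Lemma~\ref{moreunifconnbd}, and when their clusters are distinct the events decouple at the precision required. This yields the ``condition (2)'' contribution $(2\epsm)^2(1+o(1))$. Conditional on (2), conditions (1) and (3) each hold w.h.p. Condition (1) follows because a cluster of size $\geq k_0 = \epsm^{-2}(\epsm^3 V)^{1/4}$ has intrinsic radius exceeding $r$ w.h.p.\ (since $r$ is only slightly larger than the supercritical intrinsic scale $\epsm^{-1}$), and two large clusters rooted at independent typical vertices have disjoint intrinsic $r$-balls w.h.p.\ since $\E|B(r)| = o(V^{1/2})$. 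For condition (3), compute $\E[S_{2r+r_0}(x,y)]$ via BK: it factorizes as a sum over edges $(u,u')$ of products of connection probabilities, and using Lemma~\ref{moreunifconnbd} to control the long connections $x \stackrel{\geq \mnot}{\lrfill} u$, $y\stackrel{\geq \mnot}{\lrfill} u'$, one gets a main term of order $\m \epsm^2 (\E|B(r_0)|)^2$. A truncated second moment on $S_{2r+r_0}$ — again leaning on Lemma~\ref{moreunifconnbd} to compute the variance — then shows it exceeds the threshold $V^{-1}\m\epsm^{-2}(\E|B(r_0)|)^2$ w.h.p.; the gap of order $\epsm^4 V$ between mean and threshold is what makes this go through.

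For the second moment, one expands $\E[\Park^2] = \sum_{x_1,y_1,x_2,y_2}\prob(G(x_1,y_1)\cap G(x_2,y_2))$, where $G(x,y)$ denotes goodness. For 4-tuples whose intrinsic $r$-balls are pairwise disjoint and whose clusters are distinct (the dominant regime) the four goodness events almost factorize by Lemma~\ref{moreunifconnbd}, giving $\prob(G(x_1,y_1))\prob(G(x_2,y_2))(1+o(1))$; the remaining 4-tuples (overlapping balls or shared clusters) contribute $o((2\epsm V)^4)$, controlled by the strong triangle condition together with the cluster-tail bound \refeq{lb-cluster-tail}. The \emph{main obstacle} is the condition (3) analysis in the first moment: sharp control of $\E[S_{2r+r_0}(x,y)]$ and, even more, of its variance, requires a detailed understanding of the intrinsic boundary $\partial B(r+r_0)$ inside a supercritical cluster — in particular, that this boundary has size of order $\epsm \cdot \E|B(r_0)|$ with small enough fluctuations that a second-moment estimate closes. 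This forces a careful cluster-exploration decomposition, mediated by the triangle condition, and is what dictates the delicate choices of $r$ and $r_0$ (with $r\gg \epsm^{-1}$ just barely, and $r_0 \gg r$); this is the technical heart of \cite{HN12}.
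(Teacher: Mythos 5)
Your overall architecture (first and second moments of $\Park$, Chebyshev, with Theorem~\ref{clustertail} and Lemma~\ref{moreunifconnbd} supplying the decoupling of the survival events for distinct pairs) matches the intended strategy, and you correctly locate the crux at condition (3) of Definition~\ref{goodpairs}. However, the mechanism you propose for that crux is exactly the one the paper warns does not work. You want to show $S_{2r+r_0}(x,y)\geq V^{-1}\m\epsm^{-2}(\E|B(r_0)|)^2$ w.h.p.\ for a typical surviving pair by a ``truncated second moment on $S_{2r+r_0}$.'' But conditioned on survival (i.e.\ on $\partial B_x(r)\neq\emptyset$, $\partial B_y(r)\neq\emptyset$ and disjointness), $S_{r+r_0}(x,y)$ is \emph{not} concentrated --- indeed even $|B(r_0)|$ is not: as for a supercritical branching process, $Z_n/\mu^n$ converges to a nondegenerate limit $W$, so the conditional variance of $S$ is of the \emph{same order} as the square of its conditional mean, because the early generations of the exploration have a lasting multiplicative effect. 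A Chebyshev/Paley--Zygmund argument then gives at best a probability bounded away from $1$, whereas Theorem~\ref{alphapairs} needs the conditional probability of goodness to be $1-o(1)$. Relatedly, your claimed ``gap of order $\epsm^4 V$ between mean and threshold'' is illusory: the unconditional mean of $S_{2r+r_0}(x,y)$ is of order $\m(\E|B(2r+r_0)|)^2/V$, so the threshold $V^{-1}\m\epsm^{-2}(\E|B(r_0)|)^2$ is comparable to the \emph{conditional} mean given survival (the $\epsm^{-2}$ exactly cancels the survival probability $\asymp\epsm^2$), and in any case $\epsm^4V=\epsm\cdot(\epsm^3V)$ need not tend to infinity under the standing hypotheses.

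The missing idea, which is the technical heart of \cite{HN12}, is to condition on the \emph{entire realization} of $B_x(r)$ and $B_y(r)$ rather than merely on the survival event. Since $r\gg\epsm^{-1}$ exceeds the correlation length, the non-concentration is entirely carried by these first $r$ generations; given $B_x(r)$ and $B_y(r)$, the variable $S_{2r+r_0}(x,y)$ (built from the further $r+r_0$ levels of exploration started from $\partial B_x(r)$ and $\partial B_y(r)$) \emph{is} concentrated around a conditional mean of order $|\partial B_x(r)|\,|\partial B_y(r)|\cdot\m(\E|B(r_0)|)^2/V$, and the conditional second moment closes. One then needs a separate input showing that for a typical surviving pair the boundaries $|\partial B_x(r)|$, $|\partial B_y(r)|$ are at least of order $\epsm^{-1}\cdot(1+\epsm)^r$ (this is where $r\gg\epsm^{-1}$ ``just barely'' and the cluster-tail analysis enter), after which one averages over the realizations of the two balls. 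Without this conditioning step your proof of the first moment of $\Park$ --- and hence the whole argument --- does not close.
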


In light of Theorem \ref{clustertail}, we expect that the number of pairs
of vertices $(x,y)$ with $|\C(x)|\geq (\epsm^3 V)^{1/4}\epsm^{-2}$ and
$|\C(y)| \geq (\epsm^3 V)^{1/4}\epsm^{-2}$
is close to $(2\epsm V)^2$. Theorem \ref{alphapairs} shows that almost all of these pairs
have clusters that share many edges between them.
Theorem \ref{alphapairs} allows us to prove
Theorem \ref{mainthmgeneral}, as we describe in more detail in the next section.

The difficulty in Theorem \ref{alphapairs}
is the requirement (3) in Definition \ref{goodpairs}. Indeed, conditioned on survival (that is,
on $\partial B_x(r) \neq \emptyset$, $\partial B_y(r)\neq \emptyset$ and that the balls are disjoint),
the random variable $S_{r+r_0}(x,y)$ is not concentrated and
hence it is hard to prove that it is large. In fact, even the variable
$|B(r_0)|$ is not concentrated. This is not a surprising fact: the number of descendants at
generation $n$ of a branching process with mean $\mu>1$ divided by $\mu^n$ converges as
$n\to \infty$ to a non-trivial random variable. Non-concentration
occurs because the first generations of the process have a strong and lasting effect
on the future of the population. In \cite{HN12}, we counteract this non-concentration by
conditioning on the whole structure of $B_x(r)$ and $B_y(r)$. Since $r$ is bigger than the correlation length ($r \gg \epsm^{-1}$), under this conditioning the variable $S_{r+r_0}(x,y)$ is concentrated (as one would expect from the branching process analogy).

\subsection{Sprinkling and improved sprinkling}
\label{sec-intsprinkling}
The sprinkling technique was invented by Ajtai, Koml\'{o}s and Szemer\'{e}di
\cite{AKS82} to show that $|\C_1|=\Theta(2^\m)$ when $p=(1 +\eps)/\m$ for fixed $\eps>0$
and can be described as follows. Fix some small $\theta>0$ and write
$p_1 = {(1 + (1-\theta)\eps)/\m}$ and $p_2\geq \theta \eps/ \m$
such that $(1-p_1)(1-p_2)=1-p$. It is clear that $G_p$ is distributed
as the union of the edges in two independent copies of $G_{p_1}$
and $G_{p_2}$. The sprinkling method consists
of two steps. The first step is performed in $G_{p_1}$ and uses a branching
process comparison argument together with an Azuma-Hoeffding concentration inequality to obtain
that \whp\ at least $c_2 2^{\m}$ vertices are contained in connected
components of size at least $2^{c_1 \m}$ for some small but fixed
constants $c_1, c_2>0$. In the second step we
add the edges of $G_{p_2}$ (these are the ``sprinkled'' edges) and show that they
connect many of the clusters of size at least $2^{c_1 \m}$ into a giant cluster
of size $\Theta(2^\m)$.

Let us give some details on how the last step is done. A key tool here is the
\emph{isoperimetric inequality} for the hypercube stating that two disjoint
subsets of the hypercube of size at least $c_2 2^{\m}/3$ have at least $2^\m/\m^{100}$ disjoint
paths of length $C(c_2)\sqrt{\m}$ connecting them, for some constant $C(c_2)>0$.
(The $\m^{100}$ in the denominator
is not sharp, but this is immaterial as long as it is a polynomial in $\m$.)
This fact is used in the following way. Write $V'$ for the set of vertices which
are contained in a component of size at least $2^{c_1 \m}$ in $G_{p_1}$ so that
$V'\geq c_2 2^\m$. We say that \emph{sprinkling fails} when
$|\C_1| \leq c_2 2^\m/3$ in the union $G_{p_1} \cup G_{p_2}$. If
sprinkling fails, then we can partition $V'=A \uplus B$ such that both
$A$ and $B$ have cardinality at least $c_2 2^\m/3$ and {\em any}  path of length
at most $C(c_2)\sqrt{\m}$ between them has an edge which is $p_2$-closed. The number
of such partitions is at most $2^{2^\m/2^{c_1 \m}}$. The probability that a path
of length $k$ has a $p_2$-closed edge is $1- p_2^k$. Applying the isoperimetric inequality
and using that the paths guaranteed to exist by it are disjoint so that
the edges in them are independent, the probability that
sprinkling fails is at most
    \be
    \label{easysprinkle}
    2^{2^\m/2^{c_1 \m}} \cdot
    \Big (1 - \big ( {\theta\eps \over \m} \big )^{C(c_2)\sqrt{\m}} \Big )^{2^\m/\m^{100}}
    = \e^{-2^{(1+o(1))\m}}\, ,
    \ee
which tends to 0.


The sprinkling argument above is not optimal due to the use of the isoperimetric
inequality. It is wasteful because it assumes that large percolation clusters
can be ``worst-case'' sets, that is, sets which saturate the isoperimetric
inequality (e.g., two balls of radius $\m/2-\sqrt{\m}$ around two vertices
at Hamming distance $\m$).
However, it is in fact very improbable for percolation clusters to be similar to this kind of worst-case sets. In \cite{HN12}, this is replaced by an argument showing that percolation clusters are ``close'' to uniform random sets of similar size, so that two large clusters share many closed edges with the property that if
we open even \emph{one} of them, then the two clusters connect.

\medskip
\paragraph{{\bf Improved sprinkling: Proof of Theorem \ref{mainthmgeneral}(b).}}
Recall that we already proved the upper bound on $|\C_1|$ below Lemma \ref{lem-conc-Z-I}, so it remains to show that
    \eqn{
    \label{mainthm.toshow}
    \prob_p \big( |\C_1| \geq (2-o(1)) \epsm V\big) = 1-o(1) \, .
    }
Recall that $p=p_c(1+\epsm)$ is our percolation probability and choose $p_1, p_2$ satisfying
    \eqn{
    p_2 = \theta \epsm /m \, , \qquad p_c(1+\epsm) = p_1 + (1-p_1)p_2 \, ,
    }
where $\theta>0$ tends to $0$ extremely slowly so that $p_1=[1+(1-o(1))\epsm)]p_c$. Denote
by $G_{p_1}$ and $G_{p_2}$ as before.
We first invoke Theorem \ref{alphapairs} in $G_{p_1}$ and deduce
that \whp
    \be
    \label{mainthm.alphapairsapp}
    \Park = (1-o(1)) 4\epsm^2 V^2 \, .
    \ee
Now we wish to show that when we ``sprinkle'' this configuration in $G_{p_1}$,
that is, when we add to the configuration independent $p_2$-open edges,
most of these vertices join together to form one cluster of size roughly $2 \epsm V$.
We construct an auxiliary simple graph $H$ with vertex set
    $$
    V(H) = \big \{ x\in G_{p_1} \colon |\C(x)|\geq (\epsm^3 V)^{1/4} \epsm^{-2} \big \} \, ,
    $$
and edge set
    $$
    E(H) = \big \{ (x,y) \in V(H)^2 \colon x,y \hbox{ are $(r,r_0)$-good} \big \} \, .
    $$
Lemma \ref{lem-conc-Z-I} and (\ref{mainthm.alphapairsapp}) now imply that \whp\ $H$ is almost the complete graph, that is
    \eqn{
	\label{auxgraph}
        |V(H)| =  (2+o(1)) \epsm V \, , \quad\qquad |E(H)| = (1-o(1)) 4 \epsm^2 V^2 \, .
    }

Denote $v=|V(H)|$ so that $v=(2+o(1)) \epsm V$ and write $x_1, \ldots, x_v$ for the vertices in $G_{p_1}$ corresponding
to those of $H$. Given $G_{p_1}$ for which the event in (\ref{auxgraph}) occurs,
we will show that \whp\ in $G_{p_1} \cup G_{p_2}$ there is no way to partition
the set of vertices into  $M_1 \uplus M_2 = \{x_1,\ldots, x_v\}$ with
$|M_1| \geq \Omega(\epsm V)$ and $|M_2| \geq \Omega(\epsm V)$ such that there is no open path in
$G_{p_1} \cup G_{p_2}$ connecting a vertex in $M_1$ with a vertex in $M_2$.
This implies that \whp\ the largest connected component in $G_{p_1} \cup G_{p_2}$
is of size at least $(2-o(1))\epsm V$.

To show this, we first note that the number of
such partitions is at most $2^{3 (\epsm^3 V)^{3/4}}$
since $|\C(x_i)| \geq (\epsm^3 V)^{1/4} \epsm^{-2}$.
Secondly, given such a partition consisting of $M_1$ and $M_2$, we
claim that the number of edges $(u,u')\in E(H)$ such that $u \in M_1$ and $u'\in M_2$ (note that, by definition,
these edges must be $p_1$-closed) is at least $\Omega(\epsm^2 V \m)$.
To see this, we consider the set of edges in $H$ for which both sides lie in either
$M_1$ or $M_2$ (more precisely, the vertices of $H$ corresponding to $M_1$ and $M_2$). This number is clearly at most
    $$
    {M_1^2 + M_2^2} \leq (4-\Omega(1))\eps^2 V \, .
    $$
Hence, by \eqref{auxgraph}, the number of edges in $H$ such that one end is in $M_1$ and the other in $M_2$ is at least $\Omega(\epsm^2 V^2)$. In other words, there are at least $c \epsm^2 V^2$ pairs $(x,y)\in M_1\times M_2$ such that
$S_{r+r_0}(x,y) \geq  c V^{-1}\m \epsm^{-2} (\E|B(r_0)|)^2$. We choose $r_0$ so that this is a large number.
In total, we counted at least order
$\eps^2 V^2 \cdot V^{-1} \m \epsm^{-2} (\E|B(r_0)|)^2$ edges $(u,u')$ and no edge is counted more than
$|B_u(r+r_0)|\cdot|B_{u'}(r+r_0)|$ times, which is at most order $\epsm^{-2} (\E|B(r_0)|)^2$
by the definition of $S_{r+r_0}(x,y)$ and the second claim follows.

Hence if $|\C_1| \leq (2-\Omega(1))\eps V$ after the sprinkling, then there exists such a partition in which all of the
above edges $(u,u')$ are $p_2$-closed. By the two claims above, the probability of this is at most
    $$
    2^{3 (\epsm^3 V)^{3/4}} (1-p_2)^{c \epsm^2 V\m} = o(1) \, ,
    $$
since $p_2= \theta \epsm / \m$ and $\theta$ goes to $0$ very slowly. This establishes the required estimate on $|\C_1|$.

We now use \eqref{mainthm.toshow} to show the required bounds on $\expec|\C(0)|$ and $|\C_2|$.
The upper bound $\E|\C(0)|\leq (4+o(1))\epsm^2 V$ is stated in Lemma \ref{lem-conc-Z-I} and the
lower bound follows immediately from our estimate on $|\C_1|$, since
    $$
    \E |\C(0)| = V^{-1} \sum_{v \in V(G)} \E |\C(v)|
    = V^{-1} \sum_{j \geq 1} \E |\C_j|^2 \geq V^{-1} \E|\C_1|^2 \geq (4-o(1))\epsm^2 V \, ,
    $$
where the first equality is by transitivity, the second equality is because each
component $\C_j$ is counted $|\C_j|$ times in the sum on the left and the last
inequality is due to (\ref{mainthm.toshow}). Furthermore, by this inequality and
Lemma \ref{lem-conc-Z-I}, we deduce that
    $$
    \sum_{j \geq 2} \E|\C_j|^2 = o(\epsm^2 V^2) \, ,
    $$
and hence $|\C_2| = o(\epsm V)$ \whp. This concludes the proof of Theorem \ref{mainthmgeneral}.
\qed

\section{Unlacing hypercube percolation: Proof of Theorem \ref{thm-unlacing}}
\label{sec-unlacing}
The main result in this section is the following proposition:

\begin{proposition}[Expectation of intrinsic balls]
\label{prop-unlacing}
Consider bond percolation on the hypercube $\{0,1\}^\m$ with $p=[1+5/(2\m^2)+B/\m^3]/(\m-1)$
for some $B>0$ sufficiently large. Then, for $\m$ sufficiently large there exists a $k\geq 1$ such that
	\eqn{
	\E|B(k)|\geq 2^{\m/2}/\m^3.
	}
Consequently, $p_c\leq [1+5/(2\m^2)+B/\m^3]/(\m-1)=1/\m+1/\m^2+7/(2\m^3)+\Theta(1/\m^4)$.
\end{proposition}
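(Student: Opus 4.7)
The consequence $p_c \leq p := [1+5/(2m^2)+B/m^3]/(m-1)$ from the ball bound is immediate: since $|B(k)| \leq |\C(0)|$ pointwise, if $\E_p|B(k)| \geq 2^{m/2}/m^3$ then $\E_p|\C(0)|\geq 2^{m/2}/m^3\gg \lambda 2^{m/3}$ at this $p$, and monotonicity of $p\mapsto\E_p|\C(0)|$ together with the defining equation \refeq{pcdef} forces $p_c\leq p$. The asymptotic expansion $p=1/m+1/m^2+(7/2)/m^3+\Theta(m^{-4})$ is an elementary expansion of $[1+5/(2m^2)+B/m^3]/(m-1)$ using $1/(m-1)=\sum_{i\geq 1}m^{-i}$. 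So the substance of the proposition lies in the intrinsic-ball lower bound.

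The plan is to apply a second-moment / Paley--Zygmund argument to the counts of open non-backtracking walk (NBW) paths. Write $q=(m-1)p=1+5/(2m^2)+B/m^3$ and choose $k=\lceil Cm^3\rceil$ with $C$ large enough that $\frac{m}{m-1}q^k\geq 2\cdot 2^{m/2}/m^3$; since $\log q\sim 5/(2m^2)$, $k$ of order $m^3$ suffices. Let $N^{(k)}_v$ denote the number of $p$-open NBW paths of length $k$ from $0$ to $v$, and set $Y_k=\sum_v N^{(k)}_v$, so that by a direct count $\E Y_k=\frac{m}{m-1}q^k$. Since $\{v\in B(k)\}\supseteq\{N^{(k)}_v\geq 1\}$, a vertex-by-vertex Paley--Zygmund bound followed by Cauchy--Schwarz gives
\begin{equation*}
\E_p|B(k)|\;\geq\;\sum_v \Pr_p(N^{(k)}_v\geq 1)\;\geq\;\sum_v\frac{(\E N^{(k)}_v)^2}{\E(N^{(k)}_v)^2}\;\geq\;\frac{(\E Y_k)^2}{\E\bigl[\sum_v (N^{(k)}_v)^2\bigr]}.
\end{equation*}
The proof reduces to showing that the second-moment diagram $\E[\sum_v (N^{(k)}_v)^2]=\sum_{(\omega,\omega')}p^{|\omega\cup\omega'|}$---the sum over ordered pairs of NBW paths of length $k$ from $0$ sharing a common endpoint---is at most $O\bigl(\E Y_k+(\E Y_k)^2/V\bigr)$. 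This will yield $\E_p|B(k)|\gtrsim \min(\E Y_k, V)\gg 2^{m/2}/m^3$.

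The diagonal contribution $\omega=\omega'$ equals exactly $\E Y_k$. The disjoint off-diagonal contribution (distinct paths with no shared edges) equals $p^{2k}$ times the number of pairs of NBWs of length $k$ from $0$ with equal endpoints; for $k\gg \Tm=\Theta(m\log m)$ the NBW uniform-mixing bounds of Fitzner--van der Hofstad \cite{FitHof11a} show this count to be $(1+o(1))[m(m-1)^{k-1}]^2/V$, producing a contribution of $(1+o(1))(\E Y_k)^2/V$. The heart of the proof, and its main technical obstacle, is the contribution from pairs with $\ell\geq 1$ shared edges: each shared edge carries a factor $p^{-1}\sim m$, so naively these terms could blow up. The resolution is that shared edges are constrained to form NBW-compatible sub-segments, and expanding over this overlap structure writes the sum as a convergent series whose summands are products of NBW two-point functions---exactly the quantities controlled by the NBW estimates of \cite{FitHof11a} that verify condition (3) of Theorem~\ref{mainthmgeneral}. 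The slow growth $q-1=O(m^{-2})$ ensures that the resulting geometric series over segment lengths converge, with only a polynomial-in-$m$ loss, which is absorbed by the safety margin term $B/m^3$ in the definition of $p$. Crucially, the same NBW input that drives the triangle-condition proof in Section~\ref{sec-unif-conn-NBW} suffices here, so no lace expansion is required.
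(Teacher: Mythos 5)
Your reduction of the ``consequently'' part is fine, but the core of your argument --- the one-shot second moment bound for the number of open non-backtracking paths of length $k=\Theta(m^3)$ --- rests on a target estimate that is false. You claim $\E\big[\sum_v (N^{(k)}_v)^2\big]=O\big(\E Y_k+(\E Y_k)^2/V\big)$. To see that this fails, fix a self-avoiding NBW path $\omega$ of length $k$ from $0$ to $v$ and note that for a positive fraction of the $\lfloor k/2\rfloor$ disjoint two-step windows of $\omega$, the window $\omega_{2i}\to\omega_{2i+1}\to\omega_{2i+2}$ can be rerouted through the unique other common neighbour of $\omega_{2i}$ and $\omega_{2i+2}$ (a $4$-cycle detour), producing another NBW path $\omega'$ of length $k$ from $0$ to $v$ with $|\omega\cup\omega'|=|\omega|+2j$ when $j$ windows are rerouted. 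Summing over the choice of rerouted windows gives
\begin{equation*}
\sum_{\omega'}p^{|\omega\cup\omega'|}\;\geq\;\sum_{j}\binom{\lfloor k/10\rfloor}{j}p^{2j}\;=\;(1+p^2)^{\lfloor k/10\rfloor}\;=\;\e^{\Omega(k/m^2)}\;=\;\e^{\Omega(m)},
\end{equation*}
so the near-diagonal part of the second moment is already $\e^{\Omega(m)}\,\E Y_k$, not $O(\E Y_k)$. (Similarly, pairs sharing an initial segment of length $j$ contribute about $(\E Y_k)^2 V^{-1}\sum_j q^{-j}=\Theta(m^2)(\E Y_k)^2/V$, not $O((\E Y_k)^2/V)$.) The strategy is not hopeless --- the Cauchy--Schwarz lower bound $(\E Y_k)^2/\E[\sum_v(N^{(k)}_v)^2]$ survives precisely when the total exponential rate of the overlap corrections per step is strictly smaller than $\log q=5/(2m^2)+O(m^{-3})$ --- but this is exactly the delicate bookkeeping you have deferred to an unproven ``convergent series'' claim, and it cannot be imported from condition (3) of Theorem \ref{mainthmgeneral}, which only controls diagrams with legs of length at most $\Tm=O(m\log m)$, whereas here the two paths have length $\Theta(m^3)$ and may split and re-merge many times.

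The paper circumvents this entirely by a different structure: Lemma \ref{lem-IH} proves, by induction on $k$, the one-step estimate $\E|\partial B(k)|\geq[1+c/m^3]\,\E|\partial B(k-1)|$ (valid while $\E|B(k)|\leq 2^{m/2}/m^3$), and the Proposition follows by iterating. In the one-step bound the only corrections to $mp\,\E|\partial B(k-1)|$ are overcounting at a \emph{single} generation, which after a BK--Reimer decomposition reduces to the short NBW return probabilities $\p^2(0,e_{1,1})$ and $\p^3(0,e_1)$ (the $4$-cycle effects, contributing $1/(2m^2)+2/m^2=5/(2m^2)$, whence the constant $5/2$ in $p$) plus long connections handled by the uniform bound of Lemma \ref{moreunifconnbd}. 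Your approach front-loads all generations into one second-moment diagram and thereby turns an $O(m^{-2})$-per-step correction into an $\e^{\Theta(m)}$ factor whose precise rate you must control to within $O(m^{-3})$ per step; as written, that computation is missing and the bound you propose to prove in its place is provably wrong.
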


The proof uses very elementary estimates on the non-backtracking
random walk transition probabilities. For completeness we provide here the
crude bounds that we will use, and remark that much more precise bounds are
available in \cite{FitHof11a}.

\begin{lemma}[NBW computations]
\label{nbw1}
Let $e_1=(1,0,\ldots,0) \in \{0,1\}^{\m}$ and $e_{1,1} =(1,1,0,\ldots,0)\in\{0,1\}^{\m}$
be hypercube vectors. Then
	$$
	\p^2(0,e_{1,1}) = {2 \over m (m-1)} \, ,
	$$
and for any fixed $t_0 \geq 2$ there exists $C=C(t_0)>0$ such that for all $t \geq t_0$,
	$$
	\p^{2t}(0,e_{1,1}) \leq C m^{-t_0-1} \, .
	$$
Furthermore,
	$$
	\p^3(0,e_1) = {1 \over m(m-1)} \, ,
	$$
and for any fixed $t_0 \geq 2$ there exists $C=C(t_0)>0$ such that for all $t \geq t_0$,
	$$
	\p^{2t+1}(0,e_1) \leq C m^{-t_0-1} \, .
	$$
\end{lemma}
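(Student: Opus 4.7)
\emph{Exact values by direct enumeration.} A NBW of length $2$ starting at $0$ is a pair $(i_1,i_2)\in\{1,\ldots,m\}^2$ with $i_1\neq i_2$, each occurring with probability $\frac{1}{m(m-1)}$. To reach $e_{1,1}$ we need $\{i_1,i_2\}=\{1,2\}$, so there are exactly $2$ admissible sequences and $\p^2(0,e_{1,1})=\frac{2}{m(m-1)}$. A NBW of length $3$ from $0$ to $e_1$ is a triple $(i_1,i_2,i_3)$ with consecutive entries distinct such that coordinate $1$ appears with odd multiplicity and every other coordinate with even multiplicity. Three copies of coordinate $1$ would force $i_1=i_2=i_3$ and violate NBW, so coordinate $1$ appears once and some $j\neq 1$ appears twice; among the orderings $(1,j,j),(j,1,j),(j,j,1)$ only $(j,1,j)$ is NBW-admissible. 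This gives $m-1$ sequences, whence $\p^3(0,e_1)=\frac{m-1}{m(m-1)^2}=\frac{1}{m(m-1)}$.

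\emph{Upper bounds via path-counting.} For general $y\in\{0,1\}^m$ of Hamming weight $w$ and length $L$, let $N_L(y)$ count NBW sequences $(i_1,\ldots,i_L)$ (no consecutive repeats) whose multiset assigns odd multiplicity to the support of $y$ and even multiplicity elsewhere. If exactly $k$ distinct coordinates appear, then $L\ge w+2(k-w)$, so $k\le(L+w)/2$; and the number of admissible sequences on a fixed $k$-coordinate set is a quantity $g_{k,w}(L)$ independent of $m$. Summing over the $\binom{m-w}{k-w}$ ways to choose the remaining coordinates,
\[ N_L(y)\le\sum_{k=w}^{\lfloor(L+w)/2\rfloor}\binom{m-w}{k-w}\,g_{k,w}(L), \]
a polynomial in $m$ of degree $\lfloor(L-w)/2\rfloor$. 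Dividing by $m(m-1)^{L-1}$ yields $\p^{2t}(0,e_{1,1})\le C(t)\,m^{-t-1}$ and $\p^{2t+1}(0,e_1)\le C(t)\,m^{-t-1}$. Since $m^{-t-1}\le m^{-t_0-1}$ whenever $t\ge t_0$, the required bound follows with $C=C(t_0)$ as soon as $C(t)$ is controlled uniformly in $t$.

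\emph{Large $t$ and the main obstacle.} The combinatorial constant $C(t)$ is at most $(2t)!$, so the bound $C(t)m^{-t-1}$ stays below $m^{-t_0-1}$ in a range like $t=O(\sqrt m)$. For $t$ much larger than the uniform mixing time $T^\ast=\Theta(m\log m)$ of the NBW on the hypercube (whose analysis is carried out in \cite{FitHof11a}), one has $\p^{2t}(0,y)\le (2+o(1))\cdot 2^{-m}$, far below $m^{-t_0-1}$. The principal obstacle is therefore the intermediate range $\sqrt m\lesssim t\lesssim m\log m$; I would bridge it by diagonalizing the NBW transition operator lifted to directed edges via the Fourier characters $\chi_A(v)=(-1)^{A\cdot v}$ of $\{0,1\}^m$. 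The group action decomposes the directed-edge space into isotypic components on which the transition matrix reduces to an $m\times m$ block whose spectral radius is strictly less than $1$ for $A\neq\emptyset$; comparing the resulting spectral expansion of $\p^{2t}(0,y)$ against the combinatorial bound at $t\asymp\sqrt m$ closes the gap and gives a single constant $C(t_0)$ working for all $t\ge t_0$.
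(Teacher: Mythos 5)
Your evaluation of the two exact probabilities is correct and matches the paper: two admissible length-$2$ sequences for $e_{1,1}$, and $m-1$ admissible length-$3$ sequences (only the ordering $(j,1,j)$ survives the non-backtracking constraint) for $e_1$.

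The upper bounds, however, contain a genuine gap that you yourself identify but do not close. Your path-counting gives $\p^{2t}(0,e_{1,1})\leq C(t)m^{-t-1}$ with $C(t)$ growing super-exponentially in $t$ (of order $(2t)^{2t}$, since $g_{k,w}(L)\leq k^L$), so the resulting bound only beats $m^{-t_0-1}$ for $t\lesssim\sqrt{m}$; the range $\sqrt{m}\lesssim t\lesssim m\log m$ is left to a spectral decomposition of the NBW operator that you describe as a plan rather than carry out. That plan is essentially the machinery of \cite{FitHof11a}, which the paper deliberately avoids for this lemma. The paper's actual argument is uniform in $t$ and entirely elementary, and rests on two observations you are missing. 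First, a symmetry reduction: writing $N_t$ for the Hamming weight of the walk at time $t$, one has exactly $\prob(X_{2t}=e_{1,1}\mid X_0=0,\,N_{2t}=2)=\frac{2}{m(m-1)}$, since conditionally on having weight $2$ the position is uniform over weight-$2$ vertices. Second, a drift bound on the weight chain over only the \emph{last} $2t_0$ steps: if $N_{2t}=2$ then $N_{2t-2t_0+k}\leq 2t_0+2$ for all $1\leq k\leq 2t_0$, and in that range each step increases the weight with probability at least $1-2t_0/m$; hence reaching weight $2$ at time $2t$ forces at least $t_0-1$ down-steps among the last $2t_0$ steps, an event of probability at most $\binom{2t_0}{t_0-1}(2t_0/m)^{t_0-1}=O(m^{-t_0+1})$. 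Multiplying the two factors gives $O(m^{-t_0-1})$ with a constant depending only on $t_0$, for every $t\geq t_0$ simultaneously — no mixing-time input and no intermediate regime to bridge. To repair your proof you should replace the global path count by this "last few steps" argument (or carry out the spectral analysis in full, which is considerably more work).
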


\begin{proof}
The equality involving $\p^2(0,e_{1,1})$ is immediate since the probability that the non-backtracking walk takes any one of the two paths of length two from $0$ to $e_{1,1}$ is $[m(m-1)]^{-1}$. For the second inequality, denote by $X_t\in \{0,1\}^\m$ the location of non-backtracking random walk after $t$ steps and by $N_t$ the number of $1$'s in $X_t$. First note that by symmetry we have
	$$
	\prob(X_{2t} = e_{1,1} \, \mid \, X_0 = 0, N_{2t} = 2) = {2 \over m(m-1)} \, .
	$$
So let us estimate the probability that $N_{2t}=2$. This event implies that $N_{2t-2t_0+k} \leq 2t_0+2$ for any $1 \leq k \leq 2t_0$. Hence, on this event the process $\{N_{2t-2t_0+k}\}_{k=1}^{2t_0}$ is stochastically bounded below by a process $\{M_k\}_{k=1}^{2t_0}$ that has independent increments taking the value $1$ with probability $1-2t_0/\m$ and $-1$ with probability $2t_0/\m$ and $M_1$ satisfying $0 \leq M_1 \leq 2t_0$. If $N_{2t}=2$, then $M_{2t_0} \leq 2$. We bound the probability of the latter event crudely: for it to occur there must have been at least $t_0-1$ of the $2t_0$ increments that take the value $-1$, hence
	$$
	\prob(N_{2t} = 2 \, \mid \, X_0 = 0) \leq {2t_0 \choose t_0-1} (2t_0/\m)^{t_0-1}
	\leq C \m^{-t_0+1} \, ,
	$$
where $C=C(t_0) > 0$. This and the previous estimate concludes the proof of the bound on $\p^{2t}(0,e_{1,1})$.

The equality $\p^3(0,e_1) = [\m(\m-1)]^{-1}$ stems from the fact that there are precisely $(\m-1)$ non-backtracking paths of length $3$ from $0$ to $e_1$, and the probability of taking each is $[\m(\m-1)^2]^{-1}$. The bound on $\p^{2t+1}(0,e_1)$ is performed almost identically to the bound on $\p^{2t}(0,e_{1,1})$, we omit the details.
\end{proof}

We prove recursive bounds on $\E|\partial B(k)|$
that form the key ingredient in the proof of Proposition \ref{prop-unlacing}.
Before doing so, we recall some notation.
For a subset of vertices $A$, we say that an event $\M$ occurs {\em off $A$},
intuitively, if it occurs in $G_p \setminus A$. Formally, for a percolation configuration
$\omega$ and a set of vertices $A$, we write $\omega_A$ for the configuration obtained from $\omega$ by
turning all the edges touching $A$ to closed. The event ``$\M$ occurs off $A$''
is defined to be $\{\omega \colon \omega_A \in \M\}$.
We often drop $p$ from the notation when it is clear what $p$ is. This framework also allows us to address the case when $A=A(\omega)$ is a \emph{random} set measurable
with respect to $G_p$, the most prominent example being $A= B_0(r)$ for some $r\geq 1$. In this case,
the event $\{\M \hbox{ {\rm occurs off} } A(\omega)\}$ is defined to be
   \be\label{offdef}
    \{\M \hbox{ {\rm occurs off} } A(\omega)\} = \{\omega \colon \omega_{A(\omega)} \in \M \} \, .
    \ee
For this example, we shall rely on the fact that, for an arbitrary event $\M$ and $A=B_x(s)$
(see \cite[(3.1)]{HN12}),
    \be
    \label{offdescription}
    \prob(\M \off B_x(s)) =
    \sum_{A} \prob(B_x(s)=A) \prob(\M \off A) \, ,
    \ee
For two events $E$, $F$, we let$E\circ F$ denote the event that there exists a set of bonds $B$ such that
$\omega_B\in E$ and $\omega_{B^c}\in F$ (here we abuse notation slightly and now use $\omega_B$
for the configuration obtained from $\omega$ by
turning all the edges in $B$ to closed).
Then, the BKR-inequality states that
	\eqn{
	\lbeq{BK-def}
	\prob_p(E\circ F)\leq \prob_p(E)\prob_p(F).
	}

\begin{lemma}[Recursive bounds on $\E|\partial B(k)|$]
\label{lem-IH} For any $c>0$ there exists $B>0$ such that if
	$$
	p = {1 + {5/(2 m^2)} + B/m^{3} \over m-1} \, ,
	$$
then, \ch{for $\m=\m(B)$ sufficiently large and} 
for any $k \geq 1$ satisfying $\E |B(k)|\leq 2^{\m/2}/\m^3$,
	\eqn{
	\label{aim-IH}
	\E|\partial B(k)| \geq [1 + c/m^{3}]\E|\partial B(k-1)|.
	}
\end{lemma}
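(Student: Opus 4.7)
The plan is to explore the cluster of $0$ in BFS order and derive the recursion by conditioning on the ball $B(k-1)$. The key observation is that given $B(k-1)$, each edge between $\partial B(k-1)$ and $V\setminus B(k-1)$ is unexplored and hence independently $p$-open. For each $y\notin B(k-1)$ set $n_y:=|\{x\in\partial B(k-1):x\sim y\}|$; the event $\{y\in\partial B(k)\}$ is then $\{\text{at least one of the }n_y\text{ relevant edges is open}\}$, so the inclusion-exclusion lower bound $1-(1-p)^{n}\geq np-\binom{n}{2}p^{2}$ yields
\begin{equation*}
\E|\partial B(k)|\;\geq\;p\,\E\!\!\sum_{y\notin B(k-1)}\!\!n_y\;-\;\tfrac{p^2}{2}\,\E\!\!\sum_{y\notin B(k-1)}\!\!n_y(n_y-1).
\end{equation*}

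Next I would use bipartiteness of the hypercube (together with the parity of walk lengths) to argue that every hypercube-neighbor of a vertex $x\in\partial B(k-1)$ lies in $B(k-2)\cup(V\setminus B(k-1))$, and that at least one such neighbor (the predecessor of $x$ on a shortest open path) lies in $B(k-2)$. Writing $d^-(x)\geq 1$ for the number of such back-neighbors and $E_{k-1}:=\sum_{x\in\partial B(k-1)}(d^-(x)-1)\geq 0$, one obtains $\sum_{y\notin B(k-1)}n_y=(m-1)|\partial B(k-1)|-E_{k-1}$. Since $p(m-1)=1+\tfrac{5}{2m^2}+\tfrac{B}{m^3}$, the lemma reduces to proving that, for $B$ large enough,
\begin{equation*}
p\,\E E_{k-1}\;+\;\tfrac{p^2}{2}\,\E\!\!\sum_{y\notin B(k-1)}\!\!n_y(n_y-1)\;\leq\;\Bigl(\tfrac{5}{2m^2}+\tfrac{B-c}{m^3}\Bigr)\E|\partial B(k-1)|.
\end{equation*}

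Both correction terms would then be bounded via a BK-plus-NBW argument. For the back-edge term: if $d^-(x)\geq 2$, then $x$ has distinct hypercube-neighbors $y_1\neq y_2\in B(k-2)$, and since the two last edges $(y_i,x)$ are disjoint, BK bounds $\E E_{k-1}$ by $\sum_{x}\sum_{y_1\neq y_2\sim x}p^2\,\tau_{\leq k-2}(0,y_1)\,\tau_{\leq k-2}(0,y_2)$, where $\tau_{\leq t}(0,y):=\prob(0\stackrel{\leq t}{\lrfill}y)$. The standard self-avoiding-walk BK estimate $\tau_{\leq t}(0,y)\leq\sum_{s\leq t}p^{s}m(m-1)^{s-1}\p^{s}(0,y)$ turns this into a weighted NBW triangle-type expression that Lemma~\ref{nbw1} can evaluate: since $y_1,y_2\sim x$ lie at hypercube distance $2$ and share exactly two common neighbors, the sharp identity $\p^{2}(0,e_{1,1})=\tfrac{2}{m(m-1)}$ together with the decay $\p^{2t}(0,e_{1,1})=O(m^{-t_0-1})$ for $t\geq t_0$ isolates the leading $O(m^{-2})$ contribution with the correct constant and leaves an $O(m^{-3})$ remainder. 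The collision term $\tfrac{p^2}{2}\E\!\sum_y n_y(n_y-1)$ — which counts triples $(x_1,x_2,y)$ with $x_1\neq x_2\in\partial B(k-1)$ sharing outer neighbor $y$ — is bounded by the same machinery and produces a contribution of size $O(m^{-3})\E|\partial B(k-1)|$.

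The hard part will be the length bookkeeping in the NBW sums. Because $k$ may be as large as $\Theta(m^4)$ before $\E|B(k-1)|$ reaches $2^{m/2}/m^3$, one is well past the uniform mixing time $\Tm=\Theta(m\log m)$; individual $\p^{s}(0,y)$ decay only to $\sim 2^{-m}$, while compounded factors $[p(m-1)]^{s}$ grow. The hypothesis $\E|B(k-1)|\leq 2^{m/2}/m^3$ is the crucial ingredient here: it keeps every $\E|B(s)|$ for $s\leq k-1$ polynomially smaller than $V$, so the aggregated sums $\sum_{y}\tau_{\leq k-2}(0,y)\p^{s}(0,y)$ stay polynomial in $m$. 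Propagating this control through the two correction estimates gives a combined bound of $\tfrac{5}{2m^2}+O(m^{-3})$, which leaves slack $\tfrac{B-c}{m^3}$ for $B$ chosen sufficiently large relative to $c$, closing the induction.
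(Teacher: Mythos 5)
Your architecture matches the paper's: condition on $B(k-1)$, lower-bound $\E[|\partial B(k)|\mid B(k-1)]$ by inclusion--exclusion, and isolate two corrections --- excess back-edges into $B(k-2)$ and collisions of two vertices of $\partial B(k-1)$ at a common outer neighbour --- to be controlled by a BK/tree-graph decomposition plus the NBW estimates of Lemma \ref{nbw1}, splitting path lengths at $\Tm$ and using $\E|B(k)|\leq 2^{\m/2}/\m^3$ for the long connections. But both of your quantitative estimates are wrong, and since the whole point is that the order-$m^{-2}$ corrections must sum to \emph{exactly} $5/(2m^2)$ (so as to cancel the numerator of $p$ and leave only $m^{-3}$ slack for $B$), this is fatal as written.

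First, the claimed bound $\E E_{k-1}\leq\sum_x\sum_{y_1\neq y_2\sim x}p^2\,\tau_{\leq k-2}(0,y_1)\tau_{\leq k-2}(0,y_2)$ is false. The events $\{0\lr y_1\}$ and $\{0\lr y_2\}$ do not occur disjointly (the geodesics share an initial segment from $0$), so BK does not apply to them; one must introduce the branching point $w$ and bound by $\sum_{\ell,w}\prob(0 \stackrel{=\ell}{\lrfill} w)\prob(P(w,y_1)\circ P(w,y_2))$ --- and it is precisely the factor $\prob(0\stackrel{=\ell}{\lrfill}w)$, summing to $\E|\partial B(\ell)|\leq \E|\partial B(k-1)|$ \emph{by the induction hypothesis}, that makes the bound proportional to $\E|\partial B(k-1)|$. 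The factor $p^2$ is also unjustified: $d^-(x)$ counts neighbours of $x$ lying in $B(k-2)$ whether or not the edges $(y_i,x)$ are open. A check at $k=3$ shows your expression is of order $m^2p^4\asymp m^{-2}$ while $\E E_2\asymp m^2p^3\asymp m^{-1}$; in general $\E E_{k-1}=(2/m+O(m^{-2}))\E|\partial B(k-1)|$, with the constant coming from the \emph{odd} return $\p^3(0,e_1)=1/(m(m-1))$ applied to the adjacent pair $(x,y)$, not from $\p^2(0,e_{1,1})$. Second, the collision term is not $O(m^{-3})\E|\partial B(k-1)|$: a typical vertex of $\partial B(k-1)$ has $\Theta(1)$ companions at Hamming distance $2$ in $\partial B(k-1)$ (this is $\tfrac{m(m-1)}{2}\p^2(0,e_{1,1})=1$), so $\tfrac{p^2}{2}\E\sum_y n_y(n_y-1)=\tfrac{1+o(1)}{2m^2}\E|\partial B(k-1)|$, and this $1/(2m^2)$ is an indispensable part of the $5/(2m^2)$ budget ($5/2=1+2+1/2-1$, the last $1$ coming from $m/(m-1)$ versus your $(m-1)p$ normalisation). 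If your accounting were right the lemma would hold with $5/(2m^2)$ replaced by $2/m^2$, contradicting $p_c=1/m+1/m^2+7/(2m^3)+\cdots$ from Theorem \ref{thm-asy-exp}. Finally, the argument must be run as an induction on $k$: the induction hypothesis is what turns $\sum_{\ell}\E|\partial B(\ell)|$ into $O(m^3)\,\E|\partial B(k-1)|$ in the long-range pieces and gives $\E|\partial B(\ell)|\leq\E|\partial B(k-1)|$ in the short-range ones; your "polynomial in $m$" remark does not substitute for this.
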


\proof We prove the claim by induction on $k$. Given $c>0$ we will choose $B$ to be large at the end of the proof --- this choice will not depend on $k$ or $m$. \ch{Given $B$, we choose $\m$ so large that
	$$
	p=(1 + {5/(2 m^2)} + B/m^{3})/(m-1)\leq (1+10/m^2)/(\m-1),
	$$
so our upper bound on $p$ is independent of $B$.} 

We start by initializing the induction. We have that $\E|\partial B(1)|=\m p$, while $\E|\partial B(0)|=1$, so that indeed \eqref{aim-IH} holds for $k=1$ (for any $B>0$ and $c>0$). Let $k \geq 1$ such that $\E|B(k)|\leq 2^{\m/2}/\m^3$ and assume the induction hypothesis holds for any $\ell \leq k-1$.

We will now estimate the conditional expectation of $|\partial B(k)|$ given $B(k-1)$. To be precise, when we condition on $B(k-1)$ we condition on all the open and closed edges touching a vertex of $B(k-2)$ (observe that since the graph is bipartite there cannot be two vertices of $\partial B(k-1)$ that are connected by an edge). This allows us to calculate $B(k-1)$ and note that edges from $\partial B(k-1)$ to $\partial B(k)$ are not revealed. Given this information, for each vertex $x \in \partial B(k-1)$ the number of edges that we have not revealed any information on is precisely $m - \sum_{y : y \sim x} \indic{y \in B(k-2)}$. Hence,
\begin{eqnarray*}
	\E \big [|\partial B(k)| \big |  B(k-1) \big ]
	\geq m p |\partial B(k-1)|  &-& p\sum_{(x,y)\in \Edges} \indic{x\in \partial B(k-1), y\in B(k-2)} \\
	&-&\frac{p^2}{2} \sum_{x,y \colon d(x,y)=2} \indic{x,y\in \partial B(k-1)} \, ,
\end{eqnarray*}
where the last term comes from subtracting the vertices of $|\partial B(k)|$ we counted more than once, which happens if they have more than one ``ancestor'' in $\partial B(k-1)$.

We take expectations in both sides and bound the two subtracted sums. We split the first sum according to whether the edge $(x,y)$ is open or not. If $(x,y)$ is open, then we must have that $y \in \partial B(k-2)$ and that $(x,y)$ is open off $B(k-2)$ (in the sense of \eqref{offdef}). Otherwise, that is, if $x \in \partial B(k-1), y \in B(k-2)$ and the edge $(x,y)$ is closed, then there exists $\ell \leq k-3$ and a vertex $w$ such that $\{0 \stackrel{=\ell}{\lrfill} w\} \circ P_{k-l-1}(w,x)\circ \{w \stackrel{k-\ell-2}{\lrfill} y\}$ occurs, where $P_{n}(w,x)$ denotes the event that there exists a path of occupied bonds of length $n$ connecting $w$ and $x$ (this is a monotone event). Indeed, let $\gamma_x$ and $\gamma_y$ be two shortest paths connecting $0$ to $x$ and $y$, respectively and take $w$ to be the last intersection of these paths and $\ell$ to be index of $w$ in the paths (this has to be the same number for both paths since they are shortest paths). Note that since the edge $(x,y)$ is closed, $w$ has to be at distance at most $k-3$ from $0$. Then the witness for the first event is $B(\ell)$ (that is, all the open and closed edges touching $B(\ell-1)$ and the two other witnesses are the parts of $\gamma_x$ and $\gamma_y$ starting at $w$ and ending at $x$ and $y$, respectively.
Similarly for the second contribution, if $x,y \in \partial B(k-1)$, then there must exists $\ell \leq k-2$ and a vertex $w$ such that $\{0 \stackrel{=\ell}{\lrfill} w\} \circ P_{k-l-1}(w,x) \circ P_{k-l-1}(w,y)$ occurs.

We sum over $\ell$ and $w$ and use the BK-Reimer inequality to obtain that
	\eqan{
	\label{basic-split}
	\E|\partial B(k)| &\geq \m p \E|\partial B(k-1)| - \text{(*)} - \text{(I)} -\text{(II)} \, , }
where
	$$
	\text{(*)} = p \sum_{w,x: x \sim w} \prob(0 \stackrel{=k-2}{\lrfill} w, (x,w) \text{ is open off } B(k-2)) \, ,
	$$
and
	$$
	\text{(I)} = p\sum_{\ell=0}^{k-3}\sum_{w,x,y\colon x \sim y}
	\prob(0 \stackrel{=\ell}{\lrfill} w) \prob(P_{k-\ell-1}(w,x)\circ \{w \stackrel{k-\ell-2}{\lrfill} y\}) \, ,
	$$
and
	$$
	\text{(II)} = \frac{p^2}{2}\sum_{\ell=0}^{k-2} \sum_{w,x,y\colon d(x,y)=2}
	\prob(0 \stackrel{=\ell}{\lrfill} w) \prob(P_{k-\ell-1}(w,x) \circ P_{k-\ell-1}(w,y)) \, .
	$$
We start by bounding from above the sum (*). By conditioning on $B(k-2)$ we may rewrite (*) as
	$$
	\text{(*)} = p \sum_{w} \sum_{A: 0 \stackrel{=k-2}{\lrfill} w} \prob(B(k-2)=A)
	\sum_{x\colon x \sim w}\prob((x,w) \text{ is open off } A \mid B(k-2)=A) \, .
	$$
Note that the probability that $(x,w)$ is open off $A$ equals $p$ for any $x$ such that $x \sim w$ and the edge $(x,w)$ is not in $A$. Since $A$ is such that $0 \stackrel{=\ell}{\lrfill} w$ and $\prob(B(k-2)=A)>0$, we learn that there are at most $m-1$ such possible $x$'s (instead of $m$, the total number of neighbors of $w$). Hence
	\be
	\label{boundstar}
	\text{(*)} \leq (m-1)p^2 \E | \partial B(k-2)| \leq (m-1)p^2 \E | \partial B(k-1)| \, ,
	\ee
where in the last line we used the induction hypothesis. We proceed by bounding from above the two sums (I) and (II). We handle the sums separately according to whether $k-\ell-1 \leq \mnot$ or not. To that aim, we define
	$$ \text{(I)}_1
	= p\sum_{\ell=0}^{k-1-\mnot}\sum_{w,x,y\colon x \sim y}
	\prob(0 \stackrel{=\ell}{\lrfill} w) \prob(P_{k-\ell-1}(w,x)\circ \{w \stackrel{k-\ell-2}{\lrfill} y\}) \, ,
	$$
and
	$$
	\text{(I)}_2 = p\sum_{\ell=k-1-\mnot}^{k-2}\sum_{w,x,y\colon x \sim y}
	\prob(0 \stackrel{=\ell}{\lrfill} w) \prob(P_{k-\ell-1}(w,x)\circ \{w \stackrel{k-\ell-2}{\lrfill} y\}) \, ,
	$$
and similarly we define (II)$_1$ and (II)$_2$. Our convention is that if $k-1 \leq \mnot$, then (I)$_2=$(II)$_2=0$.

It turns out that (I)$_1$ and (II)$_1$ contribute a negligible amount to (\ref{basic-split}). Indeed, when $k-\ell-1 \geq \mnot$ we use Lemma \ref{moreunifconnbd} to bound
	$$
	\prob(P_{k-\ell-1}(w,x)) \leq {C\E|B(k-\ell-1)|\over 2^m}\, .
	$$
We use this estimate and the BK inequality to bound
	$$
	\text{(I)}_1 \leq C mp 2^{-m} \sum_{\ell\leq k-1 -\mnot} \E|\partial B(\ell)|
	\big ( \E|B(k-\ell-1)| \big )^2 \, .
	$$
We bound $\E|B(k-\ell-1)| \leq \E|B(k)| \leq m^{-3}2^{m/2}$ by our assumption on $k$ to get that
	$$
	\text{(I)}_1 \leq C m^{-5} p  \sum_{\ell\leq k-1 -\mnot} \E|\partial B(\ell)|
	\leq C m^{-5} p \sum_{\ell\leq k-1 -\mnot} {\E|\partial B(k-1)| \over [1+cm^{-3}]^{k-\ell-1}} \, ,
	$$
where the last inequality is due to our induction hypothesis. This yields the bound
	\be
	\label{boundI1}
	\text{(I)}_1 \leq Cm^{-2} p \E|\partial B(k-1)| \leq Cm^{-3} \E|\partial B(k-1)| \, ,
	\ee
since $p = O(m^{-1})$ and $C>0$ may depend on $c$. An almost identical calculation gives that
	\be
	\label{boundII1}
	\text{(II)}_1 \leq Cm^{-3} \E|\partial B(k-1)| \, .
	\ee

Bounding (I)$_2$ and (II)$_2$ is more delicate and the local structure of the hypercube comes into play. Let us start with bounding (II)$_2$ since it is slightly simpler. We start by bounding
	$$
	\prob(P_{k-\ell-1}(w,x)\circ P_{k-\ell-1}(w,y))
	\leq m (m-1)^{2k-2\ell-2} p^{2k-2\ell-2} \p^{k-\ell-1,k-\ell-1}(x,w,y) \, ,
	$$
where $\p^{t_1,t_2}(x,w,y)$ is the probability that a non-backtracking random walk starting from $x$ visits $w$ at time $t_1$ and visits $y$ at time $t_1+t_2$. The reason for this bound is that if the event on the left hand side occurs, then there exists a simple open path of length precisely $2k-2\ell-2$ from $x$ to $y$ going through $w$ at time $k-\ell-1$. The number of such paths is bounded above by $m (m-1)^{2k-2\ell-2} \p^{k-\ell-1,k-\ell-1}(x,w,y)$ and the estimate follows by the union bound.
By transitivity we get that
	$$
	\text{(II)}_2 \leq {(1+O(m^{-1}))p^2 \over 2} \sum_{\ell=k-1-\mnot}^{k-2}
	\E |\partial B(\ell)| \sum_{x, y \colon d(x,y)=2} [(m-1)p]^{2k-2\ell-2} \p^{k-\ell-1,k-\ell-1}(x,0,y) \, .
	$$
Note that the sum over $x,y$ in the right hand side does not depend on the $0$, so by we may rewrite this sum as
	\begin{eqnarray*}
	&&\hskip-3cm\sum_{x, y \colon d(x,y)=2} [(m-1)p]^{2k-2\ell-2} \p^{k-\ell-1,k-\ell-1}(x,0,y)\\
	&=& 2^{-m} \sum_{x, y\colon d(x,y)=2} \sum_v [(m-1)p]^{2k-2\ell-2} \p^{k-\ell-1,k-\ell-1}(x,v,y) \\
	&=& 2^{-m} [(m-1)p]^{2k-2\ell-2} \sum_{x, y\colon d(x,y)=2} \p^{2k-2\ell-2}(x,y) \\
	&=& {m(m-1) \over 2} [(m-1)p]^{2k-2\ell-2} \p^{2k-2\ell-2}(0,e_{1,1}) \, ,
	\end{eqnarray*}
where in the last inequality we used the fact that on the hypercube, $\p^t(x,y)$ is the same for any pair $x,y$ such that $d(x,y)=2$, and $e_{1,1}$ is the hypercube vector $(1,1,0,\ldots,0)$. We now use the induction hypothesis which implies that $\E |\partial B(\ell)| \leq \E|\partial B(k-1)|$  to get the bound of
	$$
	\text{(II)}_2 \leq {(1+O(m^{-1}))p^2 m(m-1) \E|\partial B(k-1)| \over 4}
	\sum_{t=1}^{\mnot} [(m-1)p]^{2t} \p^{2t}(0,e_{1,1}) \, .
	$$
We now appeal to Lemma \ref{nbw1} and use the fact that $\mnot = O(\m \log \m)$ and that $\ch{(m-1)p \leq 1+10/m^2}$. A straightforward calculation with these gives that
	\be
	\label{O1}
	\sum_{t=1}^{\mnot} [(m-1)p]^{2t} \p^{2t}(0,e_{1,1}) = {2 + O(m^{-1}) \over m(m-1)}  \, .
	\ee
Thus,
	\be
	\label{boundII2}
	\text{(II)}_2 \leq {p^2 \over 2}(1+O(m^{-1}))\E|\partial B(k-1)|  \, .
	\ee

We proceed with bounding (I)$_2$. We begin by estimating
	$$
	\prob(P_{k-\ell-1}(w,x)\circ \{w \stackrel{k-\ell-2}{\lrfill} y\})
	\leq \sum _{s=0}^{k-\ell-2} \prob(P_{k-\ell-1}(w,x)\circ P_s(w,y)) \, ,
	$$
and further bound, for each $s$,
	$$
	\prob(P_{k-\ell-1}(w,x)\circ P_s(w,y) \})
	\leq m(m-1)^{k-\ell-1+s-1} p^{k-\ell-1+s} \p^{k-\ell-1,s}(x,w,y) \, ,
	$$
where $\p^{t_1,t_2}(x,w,y)$ was defined earlier, and the reasoning for this bound is as before. We get
	\be
	\label{O2}
	\text{(I)}_2 \leq (1+O(m^{-1}))p\sum_{\ell=k-1-\mnot}^{k-3}
	\E|\partial B(l)| \sum_{s=0}^{k-\ell-2} [(m-1)p]^{k-\ell-1+s}\sum_{x,y\colon x \sim y}
	\p^{k-\ell-1,s}(x,0,y) \, .
	\ee
As before, the sum over $x,y$ does not depend on $0$, that is
	$$
	\sum_{x,y\colon x \sim y} \p^{k-\ell-1,s}(x,0,y)
	= 2^{-m} \sum_{v} \sum_{x,y\colon x \sim y} \p^{k-\ell-1,s}(x,v,y)
	= 2^{-m} \sum_{x,y\colon x \sim y} \p^{k-\ell-1+s}(x,y)
	= m \p^{k-\ell-1+s}(0,e_1) \, ,
	$$
where $e_1$ is just the vector $(1,0,\ldots,0)$. As before we use the induction hypothesis to derive that $\E|\partial B(\ell)| \leq \E |\partial B(k-1)|$ to get that
	\be
	\label{O3}
	\text{(I)}_2 \leq (1+O(m^{-1}))
	mp \E |\partial B(k-1)| \sum_{\ell=k-1-\mnot}^{k-3}
	\sum_{s=0}^{k-\ell-2} [(m-1)p]^{k-\ell-1+s} \p^{k-\ell-1+s}(0,e_1) \, .
	\ee
A straightforward manipulation with the double sum gives that
	$$
	\text{(I)}_2 \leq (1+O(m^{-1})) mp \E |\partial B(k-1)|
	\sum_{t=3}^{2\mnot} 2 \lfloor t/2 \rfloor [(m-1)p]^{t}  \p^{t}(0,e_1) \, .
	$$
The dominant term here is $t=3$. We appeal to Lemma \ref{nbw1}, the fact that $\mnot = O(\m \log \m)$ and that $(m-1)p=1+O(m^{-2})$ to obtain that
	\be
	\label{O4}
	\sum_{t=3}^{2\mnot} 2 \lfloor t/2 \rfloor [(m-1)p]^{t}  \p^{t}(0,e_1)
	= {2 + O(m^{-1}) \over m(m-1) } \, .
	\ee
We get the bound
	\be
	\label{O5}
	\text{(I)}_2 \leq (1+O(m^{-1})) mp \E |\partial B(k-1)| [2m^{-2} + O(m^{-3})] \, .
	\ee
Finally, we put this together with (\ref{boundI1}), (\ref{boundII1}) and (\ref{boundII2}) into (\ref{basic-split}) to obtain
	\be
	\label{O6}
	\E|\partial B(k)| \geq \big [ mp - (m-1)p^2 - 2m^{-1}p - p^2/2 - Cm^{-3} \big ]\E|\partial B(k-1)| \, .
	\ee
\ch{Here we stress that since $p\leq (1+10/\m^2)/(m-1)$, 
the constants hidden in the $O(\cdot)$ in \eqref{O1}-\eqref{O6} are \emph{independent} of the constant
$B$ from the definition of $p$, which implies that also 
$C>0$ may depend on $c>0$ but not on $B$.}
We plug in the value of $p$ and a straightforward calculation shows that we can choose $B>0$ large enough such that
	$$
	mp - (m-1)p^2 - 2m^{-1}p - p^2/2 - Cm^{-3} \geq 1 + cm^{-3} \, ,
	$$
concluding our proof.

\qed

\noindent
{\it Proof of Proposition \ref{prop-unlacing}.}
By Lemma \ref{lem-IH}, $\E|\partial B(k)|\geq (1+c/\m^3)^k$ as long as
$\expec|B(k)|\leq 2^{\m/2}/\m^3$. Hence, eventually $\expec|B(k)|\geq 2^{\m/2}/\m^3$, proving the claim.
\qed



\end{document}